\newcounter{EQNR}
\newtheorem{theorem}{Theorem}
\newtheorem{corollary}{Corollary}
\newtheorem{proposition}{Proposition}
\DeclareMathOperator{\PSL}{PSL}
\DeclareMathOperator{\hyp}{hyp}
\let\Im\relax
\DeclareMathOperator{\Im}{Im}
\let\Re\relax
\DeclareMathOperator{\Re}{Re}
\newcommand{\C}{\mathbb{C}}
\newcommand{\h}{\mathbb{H}}
\newcommand{\N}{\mathbb{N}}
\newcommand{\R}{\mathbb{R}}
\newcommand{\Z}{\mathbb{Z}}
\newcommand{\E}{\mathcal{E}}
\newcommand{\abs}[1]{\left\vert#1\right\vert}
\DeclareMathOperator{\vol}{vol}
\def\ddu2{{\frac{\partial^2}{\partial u^2}}}
\def\Re{{\mathrm{Re}}}
\def\Im{{\mathrm{Im}}}
\def\ZZ {{\mathbb Z}}
\def\RR {{\mathbb R}}
\def\CC {{\mathbb C}}
\def\HH {{\mathbb H}}
\begin{document}

\title{Kronecker limit functions and an extension of the Rohrlich-Jensen formula}
\author{James Cogdell \and Jay Jorgenson
\footnote{The second named author acknowledges grant support from several PSC-CUNY Awards, which are jointly funded
by the Professional Staff Congress and The City University of New York.}\and Lejla Smajlovi\'{c}}
\maketitle

\begin{abstract}\noindent
In \cite{Ro84} Rohrlich proved a modular analogue of Jensen's formula.  Under certain conditions, the Rohrlich-Jensen
formula expresses an integral of the log-norm $\log \Vert f \Vert$ of a $\text{\rm PSL}(2,\ZZ)$ modular form $f$ in terms
of the Dedekind Delta function evaluated at the divisor of $f$.  In \cite{BK19} the authors re-interpreted the Rohrlich-Jensen
formula as evaluating a regularized inner product of $\log \Vert f \Vert$ and extended the result to compute a regularized
inner product of $\log \Vert f \Vert$ with what amounts to powers of the Hauptmoduli of $\text{\rm PSL}(2,\ZZ)$.  In the present article, we
revisit the Rohrlich-Jensen formula and prove that it can be viewed as a regularized inner product of special values of two
Poincar\'e series, one of which is the Niebur-Poincar\'e series and the other is the resolvent kernel of the Laplacian.
The regularized inner product can be seen as a type of Maass-Selberg relation.
In this form, we develop a Rohrlich-Jensen formula associated to any Fuchsian group $\Gamma$ of the first kind  with one cusp by employing a type of Kronecker limit formula associated to the resolvent kernel.  We
present two examples of our main result: First,  when $\Gamma$ is the full modular group $\text{\rm PSL}(2,\ZZ)$, thus reproving the theorems
from \cite{BK19}; and second when $\Gamma$ is an Atkin-Lehner group $\Gamma_{0}(N)^+$, where explicit computations are given for certain
genus zero, one and two levels.
\end{abstract}

\vskip .15in
\section{Introduction and statement of results}

\subsection{The Poisson-Jensen formula}

Let $D_{R} = \{z =x+iy\in \CC : \vert z \vert < R\}$ be the disc of radius $R$ centered at the
origin in the complex plane $\CC$.  Let $F$ be a non-constant meromorphic function on the closure
 $\overline{D_{R}}$ of $D_{R}$.  Denote by $c_{F}$ the leading non-zero 
 coefficient of $F$ at zero, meaning that
 for some integer $m$ we have that
 $F(z) = c_{F}z^{m} + O(z^{m+1})$ as $z$ approaches zero.  For any $a \in D_{R}$, let $n_{F}(a)$ denote
 the order of $F$ at $a$; there are a finite number of points $a$ for which $n_{F}(a) \neq 0$.
 With this, Jensen's formula, as stated on page 341 of \cite{La99}, asserts that
\begin{equation}\label{Jensen}
\frac{1}{2\pi}\int\limits_{0}^{2\pi}\log\vert F(Re^{i\theta})\vert d\theta
+ \sum\limits_{a \in D_{R}} n_{F}(a) \log (\vert a \vert/R) + n_{F}(0) \log (1/R) = \log \vert c_{F}\vert.
\end{equation}
One can consider the action of a M\"obius transformation which preserves $D_{R}$ and seek to determine the
resulting expression from \eqref{Jensen}.  Such a consideration leads to the
Poisson-Jensen formula, and we refer the reader to page 161 of \cite{La87} for a statement and proof.

On their own, the Jensen formula and the Poisson-Jensen formula paved the way toward Nevanlinna theory, which in
its most elementary interpretation establishes subtle growth estimates for meromorphic functions; see Chapter VI
of \cite{La99}.  Going further, Nevanlinna theory provided motivation for Vojta's conjectures whose insight
into arithmetic algebraic geometry is profound.  In particular, page 34 of \cite{Vo87} contains a type
of ``dictionary'' which translates between Nevalinna theory and number theory
where Vojta asserts that Jensen's formula should be viewed as analogous to the Artin-Whaples product formula
from class field theory.

\subsection{A modular generalization}

In \cite{Ro84} Rohrlich proved what he aptly called a modular version of Jensen's formula.  We now shall
describe Rohrlich's result.

Let $f$ be a meromorphic function on the upper half plane $\HH$ which is invariant with respect to the
action of the full modular group $\mathrm{PSL}(2,\Z)$.  Set $\mathcal{F}$ to be the ``usual'' fundamental domain
of the quotient $\mathrm{PSL}(2,\Z)\backslash \HH$, and let $d\mu$ denote the area form of the hyperbolic metric.
Assume that $f$  does not have a pole at the cusp
$\infty$ of $\mathcal{F}$, and assume further that the Fourier expansion of $f$ at $\infty$ has its constant
term equal to one.  Let $P(w)$ be the Kronecker limit function associated to the parabolic Eisenstein series associated to $\mathrm{PSL}(2,\Z)$;
below we will write $P(w)$ in terms of the Dedekind Delta function, but for now we want to keep the concept
of a Kronecker limit function in the conversation.  With all this, the Rohrlich-Jensen formula
is the statement that
\begin{equation} \label{rohrl thm}
\frac{1}{2\pi}\int\limits_{\mathrm{PSL}(2,\Z)\backslash \h} \log|f(z)|d\mu(z) + \sum_{w\in \mathcal{F}}
\frac{\mathrm{ord}_w(f)}{\mathrm{ord}(w)}P(w) =0.
\end{equation}

In this expression, $\mathrm{ord}_w(f)$ denotes the order of $f$ at $w$ as a meromorphic function,
and $\mathrm{ord}(w)$ denotes the order of the action of $\mathrm{PSL}(2,\Z)$ on $\HH$.  As a means
by which one can see beyond the above setting, one can view \eqref{rohrl thm} as evaluating the inner product
$$
\langle 1,\log|f(z)| \rangle=\int\limits_{\mathrm{PSL}(2,\Z)\backslash \h} 1\cdot \log|f(z)|d\mu(z)
$$
within the Hilbert space of $L^{2}$ functions on $\mathrm{PSL}(2,\Z)\backslash \HH$.

There are various directions in which \eqref{rohrl thm} has been extended.  In \cite{Ro84}, Rohrlich described
the analogue of \eqref{rohrl thm} for general Fuchsian groups of the first kind and for meromorphic modular forms $f$ of
non-zero weight; see page 19 of \cite{Ro84}.  In \cite{HIvPT19} the authors studied the
quotient of hyperbolic three space when acted upon by the discrete group $\mathrm{PSL}(2,\mathcal{O}_K)$
where $\mathcal{O}_K$ denotes the ring of integers of an imaginary quadratic field $K$.  In that setting, the function
$\log \vert f \vert$ is replaced by a function which is harmonic at all but a finite number of points and
at those points the function has prescribed singularities.  As in \cite{Ro84}, the analogue of \eqref{rohrl thm} involves
a function $P$ which is constructed from a type of Kronecker limit formula.

In \cite{BK19} the authors returned to the setting of $\mathrm{PSL}(2,\Z)$ acting on $\HH$.
Let $q_{z}=e^{2\pi i z}$ be the standard local coordinate near $\infty$ of $\mathrm{PSL}(2,\Z)\backslash \HH$.  The
Hauptmodul $j(z)$ is the unique $\mathrm{PSL}(2,\Z)$ invariant holomorphic function on $\HH$ whose expansion near
$\infty$ is $j(z) = q_{z}^{-1} + o(q_{z}^{-1})$  as $z$ approaches $\infty$. Let $T_{n}$ denote the $n$-th Hecke
operator and set $j_{n}(z) = j|T_n (z)$.  The main results of \cite{BK19} are the derivation of formulas for
the regularized scalar product $\langle j_n(z),\log ((\Im (z))^k|f(z)|) \rangle$ where $f$ is a weight $2k$ meromorphic modular
form with respect to $\mathrm{PSL}(2,\Z)$.  Below we will discuss further the formulas from \cite{BK19} and describe
the way in which their results are natural extensions of \eqref{rohrl thm}.

\subsection{Revisiting Rohrlich's theorem}

The purpose of this article is to extend the point of view that the Rohlrich-Jensen formula is the
evaluation of a particular type of inner product.  To do so, we shall revisit the role of each of the two terms
$j|T_n (z)$ and $\log ((\Im (z))^k|f(z)|)$.

The function $j|T_n (z)$ can be characterized as the unique holomorphic function which is $\mathrm{PSL}(2,\Z)$
invariant on $\HH$ and whose expansion near $\infty$ is $q_{z}^{-n} + o(q_{z}^{-1})$.  These properties hold
for the special value $s=1$ of the Niebur-Poincar\'e series $F_{-n}^{\Gamma}(z,s)$, which is
defined in \cite{Ni73} for any Fuchsian group $\Gamma$ of the first kind with one cusp and discussed in section \ref{sect_NP-series} below.  As proved in \cite{Ni73}, for any $m\in\N$,
the Niebur-Poincar\'e series $F_{m}^{\Gamma}(z,s)$ is an eigenfunction of the hyperbolic Laplacian $\Delta_{\hyp}$;
specifically, we have that
$$
\Delta_{\hyp} F_{m}^{\Gamma}(z,s) = s(s-1)F_{m}^{\Gamma}(z,s).
$$
Also, $F_{m}^{\Gamma}(z,s)$ is orthogonal to constant functions.

Furthermore, if $\Gamma = \mathrm{PSL}(2,\Z)$,
then for any positive integer $n$ there is an explicitly computable constant $c_{n}$ such that
\begin{equation}\label{j_via_F}
F_{-n}^{\mathrm{PSL}(2,\Z)}(z,1) = \frac{1}{2\pi\sqrt{n}}j_{n}(z) + c_{n}.
\end{equation}
As a result, the Rohrlich-Jensen formula proved in \cite{BK19},
when combined with Rohrlich's formula from \cite{Ro84}, reduces to computing the regularized inner product of
$F_{-n}^{\mathrm{PSL}(2,\Z)}(z,1)$ with $\log ((\Im (z))^k|f(z)|)$.

As for the term $\log ((\Im (z))^k|f(z)|)$, we begin by recalling Proposition 12 from \cite{JvPS19}.  Let $2k\geq 4$ be
any even positive integer, and let $f$ be a weight $2k$ meromorphic form $f$ which is $\Gamma$ invariant and with $q$-expansion at $\infty$
that is normalized so its constant term is equal to one.  Set $\Vert f\Vert(z) = y^{k}\vert f(z) \vert$, where $z=x+iy$.  Let
$\mathcal{E}^{\mathrm{ell}}_{\Gamma,w}(z,s)$ be the elliptic Eisenstein
series associated to the aforementioned data; a summary of the relevant properties of $\mathcal{E}^{\mathrm{ell}}_{\Gamma,w}(z,s)$
is given in section \ref{ell_Eisen_series} below.  Then, in \cite{JvPS19} it is proved that one has  the asymptotic relation
\begin{equation} \label{ell kroneck limit one cusp}
\sum_{w\in \mathcal{F}_\Gamma} \mathrm{ord}_w(f) \mathcal{E}^{\mathrm{ell}}_{\Gamma,w}(z,s)=
-s\log\left( |f(z)| |\eta_{\Gamma,\infty}^4(z)|^{-k}\right) + O(s^2)
\,\,\,\,\,
\textrm{\rm as $s \to 0$}
\end{equation}
where $\mathcal{F}_\Gamma$ is the fundamental domain for the action of $\Gamma$ on $\mathbb{H}$ and $\eta_{\Gamma,\infty}(z)$ is the analogue of the classical eta function for the modular group, see the Kronecker limit formula \eqref{KronLimitPArGen} for the parabolic Eisenstein series.
With this, formula \eqref{ell kroneck limit one cusp} can be written as
\begin{equation} \label{ell kroneck limit one cusp2}
\log\left( \Vert f\Vert (z) \right) = kP_{\Gamma}(z) -  \sum_{w\in \mathcal{F}_\Gamma}
\mathrm{ord}_w(f) \lim_{s\to 0} \frac{1}{s} \mathcal{E}^{\mathrm{ell}}_{\Gamma,w}(z,s),
\end{equation}
where $P_{\Gamma}(z)=\log(|\eta_{\Gamma,\infty}^4(z)|\Im (z))$ is the Kronecker limit function associated to the
parabolic Eisenstein series $\E^{\mathrm{par}}_{\Gamma,\infty}(z,s)$; the precise normalizations and
expressions defining $\E^{\mathrm{par}}_{\Gamma,\infty}(z,s)$ will be clarified below.

Following \cite{CJS20}, one can recast \eqref{ell kroneck limit one cusp2} in terms of
the resolvent kernel, which we now shall undertake.

The resolvent kernel, also called the automorphic Green's function, $G_s^{\Gamma}(z,w)$ is the integral kernel
which for almost all $s \in \CC$ inverts the operator $\Delta_{\hyp} + s(s-1)$.  In other words,
$$
\Delta_{\hyp} G_s^{\Gamma}(z,w) = s(1-s) G_s^{\Gamma}(z,w).
$$
The resolvent kernel is closely related to the elliptic Eisenstein series; see \cite{vP16} as well as \cite{CJS20}.
Specifically, from Corollary 7.4 of \cite{vP16}, after taking into account a sign difference in our normalization,
we have that
\begin{equation}\label{Ell Green connection}
\mathrm{ord}(w) \mathcal{E}^{\mathrm{ell}}_{\Gamma,w}(z,s) = -\frac{2^{s+1}\sqrt{\pi} \Gamma(s+1/2)}{\Gamma(s)}G_s^{\Gamma}(z,w) +O(s^2)
\,\,\,\,\,
\textrm{\rm as $s \to 0$}
\end{equation}
for all $z,w \in\h$ with $z\neq \gamma w$ when $\gamma\in\Gamma$.
It is now evident that one can
express $\log\left( \Vert f \Vert(z) \right)$ as a type of Kronecker limit function.
Indeed, upon using the functional equation for the Green's function, we will prove below the following
result.  Under certain general conditions the form $f$, as described above, can be realized through a type of factorization
theorem, namely that
\begin{align}\label{log norm basic}
\log\left( \Vert f\Vert (z) \right)&=-2k + 2\pi \sum_{w\in \mathcal{F}_{\Gamma}} \frac{\mathrm{ord}_w(f)}{\mathrm{ord}(w)}
\lim_{s\to 1}\left(G_s^{\Gamma}(z,w) + \E_{\Gamma,\infty}^{\mathrm{par}}(z,s)\right)\notag \\ &= 2\pi \sum_{w\in \mathcal{F}_\Gamma}
\frac{\mathrm{ord}_w(f)}{\mathrm{ord}(w)} \left[ \lim_{s\to 1}\left(G_s^{\Gamma}(z,w) + \E_{\Gamma,\infty}^{\mathrm{par}}(z,s)\right) -
\frac{2}{\mathrm{vol}_{\hyp}(\Gamma \backslash \HH)} \right].
\end{align}

With all this, it is evident that one can view the inner product realization of the Rohrlich-Jensen
formula as a special value of the inner product of the Niebur-Poincar\'e series $F_{m}^\Gamma(z,s)$ and the resolvent
kernel $G_{s}^{\Gamma}(z,w)$ plus the parabolic Eisenstein series $\E_{\Gamma,\infty}^{\mathrm{par}}(z,s)$.  Furthermore, because all
terms are eigenfunctions of the Laplacian, one can seek to compute the inner product
in hand in a manner similar to that which yields the Maass-Selberg formula.

\subsection{Our main results}

Unless otherwise explicitly stated, we will assume for the remainder
of this article that $\Gamma$ is any Fuchsian group of the first kind with one cusp. By conjugating $\Gamma$, if necessary, we may assume that the cusp is at $\infty$, with the cuspidal width equal to one. The group $\Gamma$ will be arbitrary, but fixed, throughout this article, so, for the sake of brevity, in the sequel, we will suppress the index $\Gamma$ in the notation for Eisenstein series, the Niebur-Poincar\'e series, the Kronecker limit function, the fundamental domain and the resolvent kernel.
When $\Gamma$ is taken to be the modular group or the Atkin-Lehner group, that will be indicated in the notation.

With the above discussion, we have established that one manner in which the Rohrlich-Jensen formula can be
understood is through the study of the regularized inner product
\begin{equation}\label{RJ_integral}
\langle F_{-n}(\cdot,1), \overline{\lim_{s\to 1} \left(G_s(\cdot,w) + \E_\infty^{\mathrm{par}}(\cdot,s)\right)} \rangle,
\end{equation}
which is defined as follows.  Since $\Gamma$ has one cusp at $\infty$, one can construct a (Ford) fundamental domain $\mathcal{F}$
of the action of $\Gamma$ on $\HH$.  Let $M = \Gamma \backslash \HH$.  A cuspidal neighborhood $\mathcal{F}_\infty(Y)$ of $\infty$
is given by $0 < x \leq 1$ and $y \geq Y$, where $z=x+iy$ and some $Y \in \RR$ sufficiently large.
(We recall that we have normalized the cusp to be of width one.)  Let
$\mathcal{F}(Y) = \mathcal{F}\setminus \mathcal{F}_\infty(Y)$.  Then, we define \eqref{RJ_integral} to be
$$
\lim_{Y\to \infty} \int\limits_{\mathcal{F}(Y)}F_{-n}(z,1)\lim_{s\to 1} \left(G_s(z,w) + \E_\infty^{\mathrm{par}}(z,s)\right)d\mu_{\hyp}(z)
$$
where $d\mu_{\hyp}(z)$ denotes the hyperbolic volume element.  The function
$G_s(z,w) +  \E_\infty^{\mathrm{par}}(z,s)$ is unbounded as $z \to w$.  However, the asymptotic growth of the function is logarithmic
thus integrable, hence it is not necessary to regularize the integral in \eqref{RJ_integral} in a neighborhood containing $w$.  The need to
regularize the inner product \eqref{RJ_integral} stems solely the from the exponential growth behavior of the factor $F_{-n}(z,1)$ as $z\to\infty$.

Our first main result of this article is the following theorem.

\begin{theorem}\label{thm:main}
For any positive integer $n$ and any point $w\in \mathcal{F}$
\begin{equation}\label{main f-la}
\langle F_{-n}(\cdot,1), \overline{\lim_{s\to 1} \left(G_s(\cdot,w) + \E_\infty^{\mathrm{par}}(\cdot,s)\right)} \rangle
= - \frac{\partial}{\partial s} F_{-n}(w,s) \Big|_{s=1}.
  \end{equation}
\end{theorem}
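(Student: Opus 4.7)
My plan is a Maass-Selberg-type calculation exploiting that both $F_{-n}(\cdot,s_1)$ and $G_s(\cdot,w)+\E_\infty^{\mathrm{par}}(\cdot,s)$ are eigenfunctions of $\Delta_{\hyp}$ with eigenvalues $s_1(s_1-1)$ and $s(s-1)$ respectively (the former smooth, the latter carrying a distributional delta at $z=w$ from the resolvent). For $s_1$ generic and in the region of absolute convergence of the Niebur-Poincar\'{e} series, introduce
$$
H(s_1,s;Y)=\int_{\mathcal{F}(Y)} F_{-n}(z,s_1)\bigl(G_s(z,w)+\E_\infty^{\mathrm{par}}(z,s)\bigr)\,d\mu_{\hyp}(z).
$$
Applying Green's identity on $\mathcal{F}(Y)\setminus B_\epsilon(w)$ and letting $\epsilon\to 0$, the paired side boundaries of $\mathcal{F}$ cancel by $\Gamma$-invariance, the delta at $w$ produces $F_{-n}(w,s_1)$, and one obtains
$$
\bigl(s(s-1)-s_1(s_1-1)\bigr)H(s_1,s;Y)+F_{-n}(w,s_1)=B(s_1,s;Y),
$$
where $B(s_1,s;Y)$ is a boundary integral along the horizontal segment $\{y=Y\}$.

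Next, specialize to $s_1=1$, where $F_{-n}(\cdot,1)$ is harmonic: the identity becomes $s(s-1)H(1,s;Y)=B(1,s;Y)-F_{-n}(w,1)$. Both sides vanish at $s=1$, which forces $B(1,1;Y)=F_{-n}(w,1)$ by continuity, and L'H\^{o}pital yields $H(1,1;Y)=\partial_s B(1,s;Y)|_{s=1}$. Taking $Y\to\infty$ then matches the regularized inner product on the left of \eqref{main f-la}, so the theorem reduces to proving $\lim_{Y\to\infty}\partial_s B(1,s;Y)|_{s=1}=-\partial_s F_{-n}(w,s)|_{s=1}$.

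Finally, I would evaluate $B$ via Fourier expansion in $x$. Of the Fourier modes, only $m\in\{0,\pm n\}$ contribute in the limit $Y\to\infty$, since all other modes are exponentially decaying in at least one factor. The crucial spectral input is that the $n$-th Fourier coefficient of $G_s(z,w)+\E_\infty^{\mathrm{par}}(z,s)$ at the cusp is, up to a universal constant, $F_{-n}(w,s)\,y^{1/2}K_{s-1/2}(2\pi n y)$, which is derivable from the standard construction of the resolvent as an average of a Poincar\'{e}-type kernel. Paired with the Bessel-$I_{s_1-1/2}(2\pi n y)$ seed of $F_{-n}(z,s_1)$ at $s_1=1$, the known Wronskian of $I_{1/2}$ and $K_{s-1/2}$ reduces this contribution to a universal multiple of $F_{-n}(w,s)$; differentiating in $s$ at $s=1$ then produces $-\partial_s F_{-n}(w,s)|_{s=1}$. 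The main obstacle I anticipate is handling the $m=0$ mode, which produces polynomial-in-$Y$ pieces whose $s$-derivative at $s=1$ threatens a $\log Y$ divergence; the cancellation of these terms as $Y\to\infty$ is underwritten by the matched residue at $s=1$ of $G_s(\cdot,w)$ and $\E_\infty^{\mathrm{par}}(\cdot,s)$, which is precisely the reason the inner product in \eqref{RJ_integral} is regularizable at all, but verifying this cancellation explicitly is the most delicate bookkeeping in the argument.
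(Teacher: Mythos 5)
Your proposal follows essentially the same route as the paper: a Maass--Selberg/Green's identity argument on the truncated domain $\mathcal{F}(Y)$, using holomorphy of $G_s(\cdot,w)+\E_\infty^{\mathrm{par}}(\cdot,s)$ at $s=1$ to trade the regularized inner product for $\partial_s$ at $s=1$ of the boundary term at $y=Y$, which is then evaluated by Fourier expansion, with the $\pm n$ modes giving the $I$--$K$ Bessel (Wronskian-type) main term and the zero mode and remaining modes vanishing as $Y\to\infty$. The only differences are cosmetic: the paper works directly at $s_1=1$ via a ``judicious zero'' (the interior singularity at $z=w$ contributes an $s$-independent term that is killed by $\partial_s$, so it never appears), and it supplies precisely the details you flag or gloss over --- the term where $\partial_s$ falls on the Bessel factor (shown to vanish via $\partial_\nu K_\nu$ at half-integer order and $\mathrm{Ei}$ asymptotics), the uniform-in-$j$ control of the infinite mode sum via Kloosterman zeta bounds together with \eqref{eq: N-P deriv bound}, and the $T_2$ computation showing the zero-mode $\log Y/Y$ terms disappear.
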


We can combine Theorem \ref{thm:main} with the factorization theorem \eqref{log norm basic} and
properties of $F_{-n}(z,1)$ proved in \cite{Ni73} and obtain the following extension of the Rohrlich-Jensen formula.

\begin{corollary}\label{Rohr-Jensen}
In addition to the notation above, assume that the even weight $2k\geq 0$ meromorphic form $f$ has been normalized so its $q$-expansion
at $\infty$ has constant term equal to $1$.  Then we have that

\begin{equation}\label{main f-la - corollary}
\langle F_{-n}(\cdot ,1), \log\Vert f\Vert  \rangle = - 2\pi \sum_{w\in \mathcal{F}} \frac{\mathrm{ord}_w(f)}{\mathrm{ord}(w)}\frac{\partial}{\partial s}\left. F_{-n}(w,s) \right|_{s=1}.
  \end{equation}
\end{corollary}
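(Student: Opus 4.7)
The plan is to reduce Corollary \ref{Rohr-Jensen} directly to Theorem \ref{thm:main} using the factorization identity \eqref{log norm basic}. First I would substitute the second line of \eqref{log norm basic} for $\log\Vert f\Vert(z)$ into the regularized inner product on the left-hand side of \eqref{main f-la - corollary} and pull the finite sum over $w\in\mathcal{F}$ outside by linearity, obtaining
$$
\langle F_{-n}(\cdot,1),\log\Vert f\Vert\rangle = 2\pi\sum_{w\in\mathcal{F}}\frac{\mathrm{ord}_w(f)}{\mathrm{ord}(w)}\left[\langle F_{-n}(\cdot,1),\lim_{s\to 1}\left(G_s(\cdot,w)+\E_\infty^{\mathrm{par}}(\cdot,s)\right)\rangle - \frac{2}{\vol_{\hyp}(\Gamma\backslash\HH)}\langle F_{-n}(\cdot,1),1\rangle\right].
$$
Since $G_s(z,w)$ and $\E_\infty^{\mathrm{par}}(z,s)$ are real-valued for real $s$, their limiting sum as $s\to 1$ is real-valued, so the complex conjugation appearing in Theorem \ref{thm:main} is vacuous in this setting and the first inner product inside the bracket equals $-\frac{\partial}{\partial s}F_{-n}(w,s)\big|_{s=1}$.

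Next I would handle the second term in the bracket using the fact, recalled in the introduction, that $F_{-n}(\cdot,1)$ is orthogonal to the constant function $1$ on $\mathcal{F}$, so $\langle F_{-n}(\cdot,1),1\rangle=0$ and the $\vol_{\hyp}(\Gamma\backslash\HH)^{-1}$ contribution drops out. Combining with Theorem \ref{thm:main} collapses the expression to
$$
-2\pi\sum_{w\in\mathcal{F}}\frac{\mathrm{ord}_w(f)}{\mathrm{ord}(w)}\frac{\partial}{\partial s}F_{-n}(w,s)\Big|_{s=1},
$$
which is precisely the right-hand side of \eqref{main f-la - corollary}.

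The step requiring the most care is the justification that the regularized inner product commutes with the finite sum over $w\in\mathcal{F}$. Each summand $\lim_{s\to 1}(G_s(\cdot,w)+\E_\infty^{\mathrm{par}}(\cdot,s))$ carries only an integrable logarithmic singularity at $z=w$ and is otherwise smooth, so its pointwise product with the bounded function $F_{-n}(\cdot,1)$ is integrable on every truncated domain $\mathcal{F}(Y)$. The regularization in \eqref{RJ_integral} is dictated entirely by the behavior of $F_{-n}(z,1)$ as $y\to\infty$ paired against the zeroth Fourier coefficient in $x$ of $G_s+\E_\infty^{\mathrm{par}}$, and this coefficient is independent of the location of $w$; hence the $Y\to\infty$ limit commutes with the finite summation. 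As a consistency check one should verify that the two equivalent formulations of \eqref{log norm basic} yield the same answer: the $-2k$ constant in the first line matches, via the valence formula for weight-$2k$ meromorphic forms with normalized constant term at $\infty$, the accumulated $-2/\vol_{\hyp}(\Gamma\backslash\HH)$ contributions in the second line, and in either form the orthogonality of $F_{-n}(\cdot,1)$ to constants removes the constant contribution before Theorem \ref{thm:main} is applied.
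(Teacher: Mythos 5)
Your argument is correct and follows essentially the same route as the paper: substitute the factorization identity \eqref{log norm basic} into the inner product, use the orthogonality of $F_{-n}(\cdot,1)$ to constants to discard the constant contribution, and invoke Theorem \ref{thm:main} termwise (with the conjugation harmless since the limit function is real-valued). The only caveat is that the paper's proof of this corollary is also where \eqref{log norm basic} itself is established -- via the elliptic Kronecker limit formula, the relation to the resolvent kernel, the functional equation of the Green's function, and the valence formula -- so a fully self-contained writeup would need to include that derivation rather than cite \eqref{log norm basic} as already available; your consistency check between the two lines of \eqref{log norm basic} is correct but does not substitute for that proof.
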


Let $g$ be a $\Gamma$ invariant analytic function which necessarily has a pole at $\infty$.  As such, there is a positive
integer $K$ and set of complex numbers $\{a_{n}\}_{n=1}^{K}$ such that
$$
g(z) = \sum_{n=1}^K a_n q_z^{-n}  + O(1)
\,\,\,\,\,
\textrm{as $z \rightarrow \infty$.}
$$
It is proved in \cite{Ni73} that
\begin{equation}\label{g expr}
g(z) = \sum_{n=1}^K 2\pi \sqrt{n}a_n F_{-n}(z,1) + c(g)
\end{equation}
for some constant depending only upon $g$.  With this, we can combine Corollary \ref{Rohr-Jensen}
and the Theorem on page 19 of \cite{Ro84} to obtain the following result.

\begin{corollary} \label{cor:j-function}
With notation as above, there is a constant $\beta$, defined by the Laurent expansion of $\E_\infty^{\mathrm{par}}(z,s)$
near $s=1$, such that
\begin{multline}\label{main f-la - corollary 2}
\langle g, \log\Vert f\Vert \rangle = - 2\pi \sum_{w\in \mathcal{F}} \frac{\mathrm{ord}_w(f)}{\mathrm{ord}(w)}
\Bigg(2\pi \sum_{n=1}^K \sqrt{n}a_n\frac{\partial}{\partial s} F_{-n}(w,s) \Big|_{s=1}  \\   + c(g)(P(w) - \beta \vol_{\hyp}(M) +2) \Bigg).
\end{multline}
\end{corollary}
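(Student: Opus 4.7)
The plan is to combine the decomposition \eqref{g expr}, Corollary \ref{Rohr-Jensen}, and Rohrlich's theorem from page 19 of \cite{Ro84}. First I would invoke \eqref{g expr} to write
$$g(z)=\sum_{n=1}^{K}2\pi\sqrt{n}\,a_{n}\,F_{-n}(z,1)+c(g),$$
a pointwise equality of smooth $\Gamma$-invariant functions on $\h$. Since $\log\Vert f\Vert(z)=k\log y+O(e^{-2\pi y})$ near the cusp $\infty$, the scalar product $\langle 1,\log\Vert f\Vert\rangle$ equals the ordinary integral $\int_{M}\log\Vert f\Vert\,d\mu_{\hyp}$ and requires no regularization; the exponential growth that forces the regularization in \eqref{RJ_integral} enters only through the Niebur-Poincar\'e factor. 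Hence bilinearity of the regularized inner product, applied to a finite sum, gives
$$\langle g,\log\Vert f\Vert\rangle=\sum_{n=1}^{K}2\pi\sqrt{n}\,a_{n}\,\langle F_{-n}(\cdot,1),\log\Vert f\Vert\rangle+c(g)\int_{M}\log\Vert f\Vert\,d\mu_{\hyp}.$$

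Applying Corollary \ref{Rohr-Jensen} to each of the $K$ inner products in the first sum and exchanging the two finite sums produces precisely
$$-2\pi\sum_{w\in\mathcal{F}}\frac{\mathrm{ord}_{w}(f)}{\mathrm{ord}(w)}\left(2\pi\sum_{n=1}^{K}\sqrt{n}\,a_{n}\,\frac{\partial}{\partial s}F_{-n}(w,s)\Big|_{s=1}\right),$$
which matches the first portion of the bracketed expression in \eqref{main f-la - corollary 2}.

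The remaining task is to identify
$$\int_{M}\log\Vert f\Vert(z)\,d\mu_{\hyp}(z)=-2\pi\sum_{w\in\mathcal{F}}\frac{\mathrm{ord}_{w}(f)}{\mathrm{ord}(w)}\bigl(P(w)-\beta\vol_{\hyp}(M)+2\bigr).$$
This is in essence Rohrlich's theorem on page 19 of \cite{Ro84}, adapted from the modular group to a Fuchsian group of the first kind with one cusp and to a weight-$2k$ form normalized so its constant $q$-term equals $1$. An alternative, self-contained derivation proceeds by integrating the factorization \eqref{log norm basic} in $z$ over $M$: the $P(w)$ term arises from the Kronecker limit expansion of $\E_{\infty}^{\mathrm{par}}(z,s)$ at $s=1$, the $\beta\vol_{\hyp}(M)$ term from integrating the constant $\beta$ against $d\mu_{\hyp}$, and the additive $+2$ is the bracketed $-2/\vol_{\hyp}(M)$ in \eqref{log norm basic} multiplied by the volume.

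The principal obstacle in this direct derivation is the careful interplay of two singular quantities under the limit $s\to 1$. On the one hand, Green's identity applied to $(\Delta_{\hyp}+s(s-1))G_{s}(\cdot,w)=\delta_{w}$ yields a contribution $1/(s(s-1))$ from $\int_{M}G_{s}(z,w)\,d\mu_{\hyp}(z)$. On the other, $\int_{M}\E_{\infty}^{\mathrm{par}}(z,s)\,d\mu_{\hyp}(z)$ must be interpreted via analytic continuation (equivalently, via the truncation-and-limit procedure underlying \eqref{RJ_integral}) since the Eisenstein series grows like $y^{s}$ at the cusp. Once both objects are placed on a common analytic footing, the simple poles at $s=1$ cancel, the finite part reproduces $P(w)-\beta\vol_{\hyp}(M)$, and combining with the bracketed $-2/\vol_{\hyp}(M)$ from \eqref{log norm basic} yields the $+2$. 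Substituting the two contributions into the decomposition of $\langle g,\log\Vert f\Vert\rangle$ above gives \eqref{main f-la - corollary 2}.
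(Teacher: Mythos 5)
Your proposal is correct and follows essentially the same route as the paper: decompose $g$ via \eqref{g expr}, use linearity of the regularized pairing, apply Corollary \ref{Rohr-Jensen} to the $F_{-n}$ terms, and reduce the constant term to evaluating $\langle 1,\log\Vert f\Vert\rangle$, equivalently $\langle 1, \overline{\lim_{s\to 1}(G_s(\cdot,w)+\E_\infty^{\mathrm{par}}(\cdot,s))}\rangle$, via the Kronecker limit expansion. The paper carries out that last evaluation slightly differently from your sketch: it adds and subtracts the pole $\tfrac{1}{\vol_{\hyp}(M)(s-1)}$ and invokes the spectral expansion to conclude that $\lim_{s\to 1}\bigl(G_s(\cdot,w)+\tfrac{1}{\vol_{\hyp}(M)(s-1)}\bigr)$ is $L^2$ and orthogonal to constants, so only the constant term $\beta-P(w)/\vol_{\hyp}(M)$ of the Eisenstein expansion survives; this sidesteps the limit-interchange at the cusp that your Green's-identity variant would have to address. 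One small correction to your sketch: with the paper's normalization the residue of $G_s(z,w)$ at $s=1$ is $-1/\vol_{\hyp}(M)$, so the distributional identity must read $(\Delta_{\hyp}+s(s-1))G_s(\cdot,w)=-\delta_w$ and $\int_M G_s(z,w)\,d\mu_{\hyp}(z)=-1/(s(s-1))$; with your stated sign the poles of $\int_M G_s$ and $\int_M \E_\infty^{\mathrm{par}}$ would add rather than cancel.
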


The constant $\beta$ is given in \eqref{KronLimitPArGen}.  We refer the reader to equation \eqref{KronLimitPArGen} for further
details regarding the normalizations which define $\beta$ and the parabolic Kronecker limit function $P$.

Finally, we will consider the generating function of the normalized series constructed from the right-hand side of \eqref{main f-la}.
Specifically, we will prove the following identity.

\begin{theorem} \label{thm:generating series}
With notation as above, the generating series
$$
\sum_{n\geq 1}2\pi \sqrt{n} \frac{\partial}{\partial s} F_{-n}(w,s) \Big|_{s=1}q_z^n
$$
is, in the $z$ variable, the holomorphic part of the weight two biharmonic Maass form
$$
\mathcal{G}_w(z):=i\frac{\partial}{\partial z} \left( \frac{\partial}{\partial s}
\left( G_s(z,w) + \E_\infty^{\mathrm{par}}(w,s)\right) \Big|_{s=1} \right).
$$
\end{theorem}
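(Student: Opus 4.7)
\emph{Proof proposal.} The strategy is to compute the Fourier expansion of $G_{s}(z,w)$ at the cusp $\infty$ in the $z$-variable, differentiate in $s$ at $s=1$, apply $i\partial/\partial z$, and extract the holomorphic part. Since $\E_{\infty}^{\mathrm{par}}(w,s)$ is constant in $z$, it plays no direct role after $i\partial/\partial z$ is applied; its presence ensures that $\partial_{s}$ is well defined by cancelling the simple pole at $s=1$ of the zeroth Fourier coefficient of $G_{s}(z,w)$, as dictated by the Kronecker limit formula for $\E_{\infty}^{\mathrm{par}}$.

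First, I would recall (via the standard Poincar\'e-series unfolding, as in the work of Fay, Hejhal and Niebur) the Fourier expansion of the resolvent kernel at $\infty$: for $\Im(z)$ sufficiently large,
$$
G_{s}(z,w) = \alpha(s,w)y^{s}+\beta(s,w)y^{1-s} + \sum_{m\neq 0} b_{m}(w,s)\, y^{1/2}K_{s-1/2}(2\pi|m|y)\, e^{2\pi i m x},
$$
where, by the duality between the resolvent kernel and the Niebur--Poincar\'e series, the coefficient $b_{m}(w,s)$ for $m>0$ is an explicit elementary multiple of $F_{-m}(w,s)$. The pole of $\beta(s,w)$ at $s=1$ is exactly cancelled by the pole of $\E_{\infty}^{\mathrm{par}}(w,s)$, so that $\partial_{s}(G_{s}(z,w)+\E_{\infty}^{\mathrm{par}}(w,s))\big|_{s=1}$ is well defined.

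Second, using the identity $K_{1/2}(u)=\sqrt{\pi/(2u)}\,e^{-u}$, the $m$-th Fourier term at $s=1$ is a constant multiple of $m^{-1/2}q_{z}^{m}$ for $m>0$. Differentiation in $s$ at $s=1$ produces a holomorphic contribution proportional to $\partial_{s}F_{-m}(w,s)\big|_{s=1}\,q_{z}^{m}$, together with non-holomorphic contributions from $\partial_{s}K_{s-1/2}|_{s=1}$ that carry factors of $\log y$ and anti-holomorphic pieces which decay at the cusp. Applying $i\partial/\partial z=\tfrac{i}{2}(\partial_{x}-i\partial_{y})$ annihilates $\E_{\infty}^{\mathrm{par}}(w,s)$ and transforms each holomorphic monomial $q_{z}^{m}$ by multiplication by $-2\pi m$, while all other contributions (involving nontrivial $y$-dependence or $\bar q_{z}$) feed into the non-holomorphic part of $\mathcal{G}_{w}(z)$. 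Matching constants against the Niebur seed $y^{1/2}I_{s-1/2}(2\pi|m|y)$ and the normalization recorded in \eqref{j_via_F} then identifies the coefficient of $q_{z}^{m}$ in the holomorphic part of $\mathcal{G}_{w}(z)$ as precisely $2\pi\sqrt{m}\,\partial_{s}F_{-m}(w,s)\big|_{s=1}$.

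Finally, for the biharmonic weight-two Maass form property, I would combine two ingredients. Differentiating $\Delta_{\hyp}G_{s}(z,w)=s(1-s)G_{s}(z,w)$ in $s$ and matching Laurent coefficients at $s=1$ shows that $\Delta_{\hyp}$ sends $\partial_{s}(G_{s}(z,w)+\E_{\infty}^{\mathrm{par}}(w,s))\big|_{s=1}$ to a function which is itself annihilated by $\Delta_{\hyp}$; iteration gives the biharmonic equation. The standard commutation between $\partial/\partial z$ and the weight-raising on the hyperbolic upper half plane then implies that $\mathcal{G}_{w}(z)$ is annihilated by the square of the weight-two hyperbolic Laplacian and transforms with weight two under $\Gamma$, by the chain rule applied to the $\Gamma$-invariance of $G_{s}(\cdot,w)$. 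The main obstacle is the bookkeeping in the second paragraph: keeping careful track of the Bessel-function constants, seed normalizations, and the $\partial_{s}K_{s-1/2}|_{s=1}$ contributions in order to isolate the genuinely holomorphic $q_{z}^{m}$-piece with its precise coefficient $2\pi\sqrt{m}$.
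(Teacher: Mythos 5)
Your proposal follows essentially the same route as the paper's proof: you use the Fourier expansion of $G_s(z,w)$ at the cusp whose nonzero modes are $\sqrt{y}\,K_{s-1/2}(2\pi|k|y)F_{-k}(w,s)e(kx)$, cancel the pole of the zeroth mode against $\E_\infty^{\mathrm{par}}(w,s)$, differentiate in $s$ at $s=1$ using $K_{1/2}(u)=\sqrt{\pi/(2u)}e^{-u}$, apply $i\partial/\partial z$ and read off the holomorphic part, and obtain biharmonicity by differentiating the eigenvalue equation so that $\Delta_{\hyp}$ applied to the $s$-derivative returns the harmonic function $-\lim_{s\to 1}\left(G_s(z,w)+\E_\infty^{\mathrm{par}}(w,s)\right)$. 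This is exactly the paper's argument, so only the Bessel-constant bookkeeping you already flagged remains to be carried out.
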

Note that the weight two biharmonic Maas form is a function which satisfies the weight two modularity in $z$ and which is annihilated by $\Delta_2^2=(\xi_0\circ \xi_2)^2$, where, classically $\xi_\kappa := 2iy^\kappa \overline{\frac{\partial}{\partial \overline{z}}}$. It is clear from the definition that $\mathcal{G}_w(z)$ satisfies the weight two modularity in the $z$ variable. In section \ref{sect proof of thm 4} we will prove that $(\xi_0\circ \xi_2)^2\mathcal{G}_w(z)=0$.

In the case $\Gamma = \text{\rm PSL}(2,\ZZ)$, our results will generalize the main theorems from
\cite{BK19}, as we will discuss below.

\subsection{Outline of the paper}

In section 2 we will establish notation and recall certain results from the literature.
There are two specific examples of Poincar\'e series which are particularly important for our
study, the Niebur-Poincar\'e series and the resolvent kernel.  Both series are defined, and basic properties
are presented in section 3.  In section 4 we state the Kronecker limit formulas associated to parabolic
and elliptic Eisenstein series, and we prove the factorization theorem \eqref{log norm basic}.  The
proofs of the main results listed above will be given in section 5.

To illustrate our results, various examples are given in section 6.  Our first example is when
$\Gamma = \text{\rm PSL}(2,\ZZ)$ where, as claimed above, our results yield the main theorems of \cite{BK19}.
We then turn to the case when $\Gamma$ is an Atkin-Lehner group $\Gamma_0(N)^+$ for square-free level $N$.
The first examples are when the genus of $\Gamma_0(N)^+$ is zero and when the function $g$
in Corollary \ref{cor:j-function} is the Hauptmodul $j_N^+(z)$.  The next two examples we present are
for levels $N=37$ and $N=103$.  For these levels the genus of the quotient by $\Gamma_0(N)^+$ is one and two, respectively.  In these cases,
certain generators of the corresponding function fields were constructed in \cite{JST13}.  Consequently, we are able to
employ the results from \cite{JST13} and fully develop Corollary \ref{cor:j-function}.

 \section{Background material}

\subsection{Basic notation} \label{notation}
Let $\Gamma\subset\text{\rm PSL}(2,\R)$ denote a Fuchsian
group of the first kind acting by fractional
linear transformations on the hyperbolic upper half-plane $\mathbb{H}:=\{z=x+iy\in\mathbb{C}\,
|\,x,y\in\mathbb{R};\,y>0\}$. We let $M:=\Gamma\backslash\mathbb{H}$, which is a finite
volume hyperbolic Riemann surface, and denote by $p:\mathbb{H}\longrightarrow M$
the natural projection. We assume that $M$ has $e_{\Gamma}$
elliptic fixed points and one cusp at $\infty$ of width one. By an abuse of notation, we also say that $\Gamma$ has a cusp at $\infty$ of width one, meaning that the stabilizer $\Gamma_\infty$ of $\infty$ is generated by the matrix $\bigl(\begin{smallmatrix}
1&1\\0&1\end{smallmatrix}\bigr)$. We identify $M$
locally with its universal cover $\mathbb{H}$.  By $\mathcal{F}$ we denote the ``usual'' (Ford) fundamental domain for $\Gamma$ acting
on $\mathbb H$.

We let $\mu_{\mathrm{hyp}}$ denote the hyperbolic metric on $M$, which is compatible with the
complex structure of $M$, and has constant negative curvature equal to minus one.
The hyperbolic line element $ds^{2}_{\hyp}$, resp.~the hyperbolic Laplacian
$\Delta_{\hyp}$ acting on functions, are given in the coordinate $z=x+iy$ on $\mathbb{H}$ by
\begin{align*}
ds^{2}_{\hyp}:=\frac{dx^{2}+dy^{2}}{y^{2}},\quad\textrm{resp.}
\quad\Delta_{\hyp}:=-y^{2}\left(\frac{\partial^{2}}{\partial
x^{2}}+\frac{\partial^{2}}{\partial y^{2}}\right).
\end{align*}
By $d_{\mathrm{hyp}}(z,w)$ we denote the hyperbolic distance between to the two points $z\in\mathbb{H}$ and
$w\in\mathbb{H}$. Our normalization of the hyperbolic Laplacian is different from the one considered in \cite{Ni73} and \cite{He83} where the Laplacian is taken with the plus sign.

\subsection{Modular forms}

Following \cite{Se73}, we define a weakly modular form $f$ of even weight $2k$ for $k \geq  0$ associated to $\Gamma$ to be a
function $f$ which is meromorphic on $\mathbb H$ and satisfies the transformation property
\begin{equation}\label{transf prop}
f\left(\frac{az+b}{cz+d}\right) = (cz+d)^{2k}f(z),
\quad\textrm{for any $\begin{pmatrix}a&b\\c&d\end{pmatrix} \in \Gamma$.}
\end{equation}

In the setting of this paper, any weakly modular
form $f$ will satisfy the relation $f(z+1)=f(z)$, so that for some positive integer $N$ we can write
$$
f(z) = \sum\limits_{n=-N}^{\infty}a_{n}q_z^{n},
\quad\text{ where } q_z =e(z)= e^{2\pi iz}.
$$
If $a_{n} = 0$ for all $n < 0$, then $f$ is said to be holomorphic at the cusp at $\infty$.
A holomorphic modular form with respect to $\Gamma$ is a weakly modular form which is holomorphic on $\mathbb H$ and at all the cusps of $\Gamma$.

When the weight $k$ is zero, the transformation property \eqref{transf prop} indicates that the function $f$ is invariant with respect to the action of elements of the group $\Gamma$, so it may be viewed as a meromorphic function on the surface $M=\Gamma\backslash \mathbb{H}$. In other words, a meromorphic function on $M$ 
is a weakly modular form of weight $0$.

For any two weight $2k$ weakly modular forms $f$ and $g$ associated to $\Gamma$, with integrable singularities at finitely many points in $\mathcal{F}$,
the generalized inner product $\langle \cdot,\, \cdot \rangle$ is defined as
\begin{equation} \label{def:inner prod}
\langle f,g\rangle = \lim_{Y\to \infty} \int\limits_{\mathcal{F}(Y)}f(z) \overline{g(z)}(\text{\rm Im}(z))^{2k}d\mu_{\hyp}(z)
\end{equation}
where the integration is taken over the portion  $\mathcal{F}(Y)$ of the fundamental domain $\mathcal{F}$ equal
to $\mathcal{F}\setminus \mathcal{F}_\infty(Y)$.


\subsection{Atkin-Lehner groups} \label{sect Atkin Leh groups}

Let $N=p_1\cdot\ldots\cdot p_r$ be a square-free, non-negative integer including the case $N=1$.
The subset of $\text{\rm SL}(2,\R)$, defined by
\begin{align*}
\Gamma_0(N)^+:=\left\{ \frac{1}{\sqrt{e}}\begin{pmatrix}a&b\\c&d\end{pmatrix}\in
  \text{\rm SL}(2,\R): \,\,\, ad-bc=e, \,\,\, a,b,c,d,e\in\Z, \,\,\, e\mid N,\ e\mid a,
   \ e\mid d,\ N\mid c \right\}
\end{align*}
is an arithmetic subgroup of $\text{\rm SL}(2,\R)$. We use the terminology Atkin-Lehner groups of level $N$ to describe $\Gamma_0(N)^+$
 in part because these groups are obtained by adding all Atkin-Lehner involutions to the congruence group $\Gamma_0(N)$, see \cite{AtLeh70}.
Let $\{\pm \textrm{Id}\}$ denote the set of two elements where $\textrm{Id}$ is the identity matrix.
In general, if $\Gamma$ is a subgroup of $\text{\rm SL}(2,\R)$, we let $\overline{\Gamma} := \Gamma /\{\pm \textrm{Id}\}$ denote its projection into $\textrm{PSL}(2,\R)$.

Set $Y_N^{+}:=\overline{\Gamma_0(N)^+} \backslash \h$. According to \cite{Cum04},
for any square-free $N$ the quotient space $Y_{N}^{+}$ has one cusp at $\infty$ with the cusp width equal to one. The spaces $Y_{N}^{+}$ will be
used in the last section where we give examples of our results for generators of function fields
of meromorphic functions on $Y_N^{+}$.

\subsection{Generators of function fields of Atkin-Lehner groups of small genus}

An explicit construction of generators of function fields of all meromorphic functions on $Y_N^{+}$ with genus $g_{N,+}\leq 3$ was given in \cite{JST13}.

When $g_{N,+}=0$, the function field of meromorphic functions on $Y_N^+$ is generated by a single function, the Hauptmodul $j_N^+(z)$, which is
normalized so that its $q$-expansion is of the form $q_z^{-1}+O(q_z)$. The Hauptmodul $j_N^+(z)$ appears in the ``Monstrous Moonshine'' and was investigated
in many papers, starting with Conway and Norton \cite{CN79}. The action of the $m$-th Hecke operator $T_m$ on $j_N^+(z)$ produces a meromorphic form on
$Y_N^{+}$ with the $q$-expansion $j_N^+|T_{m}(z)= q_z^{-m} + O(q_z)$.

When $g_{N,+}\geq 1$, the function field associated to $Y_N^+$ is generated by two functions $x_N^+(z)$ and $y_N^+(z)$. Stemming
from the results in \cite{JST13}, we have that for $g_{N,+}\leq 3$
the generators $x_N^+(z)$ and $y_N^+(z)$ such that their $q$-expansions are of the form
$$
x_N^+(z)=q_z^{-a}+\sum_{j=1}^{a-1}a_jq_z^{-j}+O(q_z) \quad \text{and} \quad y_N^+(z)=q_z^{-b}+\sum_{j=1}^{b-1}b_jq_z^{-j}+O(q_z)
$$
where $a,b$ are positive integers with $a\leq 1+g_{N,+}$, and $b\leq 2+g_{N,+}$.  Furthermore, for $g_{N,+} \leq 3$,  it is shown in \cite{JST13} that
all coefficients in the $q$-expansion for $x_N^+(z)$ and $y_N^+(z)$ are integers.  For all such $N$, the precise values of
these coefficients out to large order were computed, and the results are available at \cite{jst url}.

\section{Two Poincar\'e series}

In this section we will define the Niebur-Poincar\'e series $F_{m}(z,s)$ and the resolvent kernel, also referred
to as the automorphic Green's function $G_s(z,w)$.  We refer the reader to \cite{Ni73} for additional information
regarding $F_{m}(z,s)$ and to \cite{He83} and \cite{Iwa02} and references therein for further details regarding $G_s(z,w)$. As said above, we will suppress the group $\Gamma$ from the notation.

\subsection{Niebur-Poincar\'e series}\label{sect_NP-series}

We start with the definition and properties of the Niebur-Poincar\'e series $F_{m}(z,s)$ associated to a co-finite Fuchsian group with one cusp; then we will specialize results to the setting of Atkin-Lehner groups.

\subsubsection{Niebur-Poincar\'e series associated to a co-finite Fuchsian group with one cusp}

Let $m$ be a non-zero integer, $z=x+iy\in\h$, and $s\in\C$ with $\Re(s)>1$.
Recall the notation $e(x):=\exp(2\pi i x)$, and let $I_{s-1/2}$ denote the modified $I$-Bessel function of the first kind; see, for example Appendix B.4,
formula (B.32) of \cite{Iwa02}). The Niebur-Poincar\'e series $F_{m}(z,s)$ is defined by the series
\begin{equation}\label{Def:Niebur Poinc series}
  F_m(z,s)=F_m^\Gamma(z,s):= \sum_{\gamma\in\Gamma_\infty \backslash \Gamma} e(m\Re(\gamma z))(\Im(\gamma z))^{1/2}I_{s-1/2}(2\pi |m| \Im(\gamma z)).
\end{equation}
For fixed $m$ and $z$, the series \eqref{Def:Niebur Poinc series} converges absolutely and uniformly on any compact subset of the half plane
$\Re(s)>1$. Moreover, $\Delta_{\hyp}F_m(z,s) = s(1-s) F_m(z,s)$ for all $s\in\C$ in the half plane $\Re(s)>1$. From Theorem 5 of \cite{Ni73},
we have that for any non-zero integer $m$, the function $F_m(z,s)$ admits a meromorphic continuation to the whole complex plane $s\in\CC$.
Moreover, $F_m(z,s)$ is holomorphic at $s=1$ and, according to the spectral expansion given in Theorem 5 of \cite{Ni73}, $F_m(z,1)$
is orthogonal to constant functions, meaning that
$$
\langle F_m(z,1), 1 \rangle=0.
$$

For our purposes, it is necessary to employ the Fourier expansion of $F_{m}(z,s)$ in the cusp $\infty$.  The
Fourier expansion is proved in \cite{Ni73} and involves Kloosterman sums $S(m,n;c)$, which we now define.  For
any integers $m$ and $n$, and real number $c$, define
$$
S(m,n;c)=S_\Gamma(m,n;c):= \sum_{\bigl(\begin{smallmatrix}
a&\ast\\c&d\end{smallmatrix}\bigr)\in \Gamma_\infty  \diagdown \Gamma  \diagup \Gamma_\infty } e\left( \frac{ma + nd}{c}\right).
$$
For  $\Re(s)>1$ and $z=x+iy\in \h$, the Fourier expansion of $F_m(z,s)$ is given by
\begin{equation}\label{Four exp Nieb}
F_m(z,s)=e(mx)y^{1/2}I_{s-1/2}(2\pi |m|y) + \sum_{k=-\infty}^{\infty}b_k(y,s;m)e(kx),
\end{equation}
where
$$
b_0(y,s;m) = \frac{y^{1-s}}{(2s-1)\Gamma(s)}2\pi^s |m|^{s-1/2} \sum_{c>0} S(m,0;c)c^{-2s}=\frac{y^{1-s}}{(2s-1)}B_0(s;m)
$$
and, for $k\neq 0$
$$
b_k(y,s;m)=B_k(s;m)y^{1/2}K_{s-1/2}(2\pi |m|y),
$$
with
$$
B_k(s;m)= 2 \sum_{c>0}S(m,k;c)c^{-1}\cdot \left\{
                                            \begin{array}{ll}
                                              J_{2s-1}\left(\frac{4\pi}{c} \sqrt{mk}\right), & \textrm{\rm if \,}mk>0 \\
                                              I_{2s-1}\left(\frac{4\pi}{c} \sqrt{|mk|}\right), & \textrm{\rm if \,} mk<0.
                                            \end{array}
                                          \right.
$$
In the above expression, $J_{2s-1}$ denotes the $J$-Bessel function and $K_{s-1/2}$ is the modified Bessel function;
see, for example, formula (B.28) in \cite{Iwa02} for $J_{2s-1}$  and formula (B.34) of \cite{Iwa02})
for $K_{s-1/2}$.

According to the proof of Theorem 6 from \cite{Ni73}, the Fourier expansion \eqref{Four exp Nieb} extends by the principle of analytic continuation to the case when $s=1$, hence putting $B_k(1;m):= \lim_{s\downarrow 1} B_k(s;m)$, we have
\begin{equation}\label{Four exp Nieb at 1}
F_m(z,1)=\frac{\sinh(2\pi|m|y)}{\pi \sqrt{|m|}}e(mx) + B_0(1;m)+ \sum_{k\in\Z\setminus\{0\}}\frac{1}{2 \sqrt{|k|}} e^{-2\pi|k|y}B_k(1;m)e(kx).
\end{equation}
It is clear from \eqref{Four exp Nieb at 1} that for $n>0$ one has that
$$
F_{-n}(z,1) = \frac{1}{2\pi\sqrt{n}}q_{z}^{-n} + O(1)
\,\,\,\,\,\textrm{\rm as $z \rightarrow \infty$.}
$$
Moreover, applying $\frac{\partial }{\partial s}$ to the Fourier expansion \eqref{Four exp Nieb}, taking $s=1$ and reasoning analogously as in the proof of Lemma 4.3. (1), p. 19  of \cite{BK19}
we immediately deduce the following crude bound
\begin{equation} \label{eq: N-P deriv bound}
\left.\frac{\partial}{\partial s} F_{-n}(z,s)\right|_{s=1} \ll \exp\left( 2\pi n \Im(z)\right), \quad \text{as} \quad \Im(z) \to \infty.
\end{equation}

We note that the value of the derivative of the Niebur-Poincar\'e series at $s=1$ satisfies a
differential equation, namely that
\begin{align} \notag
 \Delta_{\hyp}\left(\frac{\partial}{\partial s}\left. F_{-n}(z,s) \right|_{s=1} \right) &=\lim_{s\to 1}
 \Delta_{\hyp} \left(\frac{F_{-n}(z,s) - F_{-n}(z,1)}{(s-1)}\right)= \\&=\lim_{s\to 1} \left(\frac{s(1-s)F_{-n}(z,s) -0}{(s-1)}\right) = -F_{-n}(z,1). \label{delta of deriv of N-P}
\end{align}

\subsubsection{Fourier expansion when $\Gamma$ is an Atkin-Lehner group}

One can explicitly evaluate $B_0(1;m)$ for $m > 0$ when $\Gamma$ is an Atkin-Lehner group. Set  $\Gamma=\overline{\Gamma_0(N)^+}$
where $N$ is a squarefree, which we express as $N=\prod\limits_{\nu=1}^r p_\nu$.  Let $B_{0,N}^+(1;m)$ denote
the coefficient $B_0(1;m)$ for $\overline{\Gamma_0(N)^+}$.

From Theorem 8 and Proposition 9 of \cite{JST13} we get that
\begin{equation}\label{B0 for A-L groups}
B_{0,N}^+(1;m)= \frac{12\sigma(m)}{\pi\sqrt{m}}\prod\limits_{\nu=1}^r \left(1-
\frac{p_\nu^{\alpha_{p_\nu}(m)+1}(p_\nu -1) }{\left(p_\nu^{\alpha_{p_\nu}(m)+1} - 1\right)(p_\nu+1)}\right) ,
\end{equation}
where $\sigma(m)$ denotes the sum of divisors of a positive integer $m$ and $\alpha_p(m)$ is the largest integer such that $p^{\alpha_p(m)}$ divides $m$.
These expressions will be used in our explicit examples in section \ref{sect:examples} below.

\subsection{Automorphic Green's function} \label{sec: aut Green}

The automorphic Green's function, also called the resolvent kernel, for the Laplacian on $M$ is defined on page 31 of  \cite{He83}.
In the notation of \cite{He83}, let $\chi$ be the identity character, $z,w\in \mathcal{F}$ with $z\neq w$, and $s\in\C$ with $\Re(s)>1$.
Formally, consider the series
$$
G_s(z,w)= \sum_{\gamma\in\Gamma} k_s(\gamma z,w)
$$
with
$$
k_s(z,w):=-\frac{\Gamma(s)^2}{4\pi \Gamma(2s)}\left[1-\left| \frac{z-w}{z-\overline{w}}\right|^2\right]^s
F\left(s,s;2s;1-\left| \frac{z-w}{z-\overline{w}}\right|^2\right)
$$
and where $F(\alpha,\beta;\gamma;u)$ is the classical hypergeometric function. We should point out that
the normalization we are using, which follows \cite{He83}, differs from the normalization for
the Green's function in Chapter 5 of \cite{Iwa02}; the two normalizations differ by a minus sign.
With this said, it is proved in \cite{He83}, Proposition 6.5. on p.33 that the series which defines $G_{s}(z,w)$ converges
uniformly and absolutely on compact subsets of $(z,w,s) \in \mathcal{F} \times \mathcal{F}\times \{s\in\C :\Re(s)>1\}$.

Furthermore, for all $s\in\C$ with $\Re(s)>1$, and all $z,w \in\h$ with $z\neq \gamma w$ for $\gamma\in\Gamma$, the function
$G_s(z,w)$ is the eigenfunction of $\Delta_{\hyp}$ associated to the eigenvalue $s(1-s)$.

Combining formulas 9.134.1. and 8.703. from \cite{GR07} and applying the identity
$$
\cosh(d_{\hyp}(z,w))=\left(2-\left[1-\left| \frac{z-w}{z-\overline{w}}\right|^2\right]\right)\left(1-\left| \frac{z-w}{z-\overline{w}}\right|^2\right)^{-1}
$$
we deduce that $$k_s(z,w)=-\frac{1}{2\pi}Q^0_{s-1}(\cosh(d_{\hyp}(z,w))),$$ where $Q_\nu^{\mu}$ is the associated Legendre function as defined by formula 8.703 in \cite{GR07}, with $\nu=s-1$ and $\mu=0$.

Now, we can combine Theorem 4 of \cite{Ni73}
with Theorem 5.3 of \cite{Iwa02}, to deduce the Fourier expansion of the automorphic Green function in terms of
the Niebur-Poincar\'e series.
Specifically, let $w\in \mathcal{F}$ be fixed. Assume $z \in \mathcal{F}$
with $y=\Im(z) > \max\{\Im(\gamma w): \gamma\in \Gamma\}$, and assume $s\in\C$ with $\Re(s)> 1$.  Then
$G_{s}(z,w)$ admits the expansion
\begin{equation}\label{Four exp Green}
G_s(z,w)=-\frac{y^{1-s}}{2s-1}\E_\infty^{\mathrm{par}}(w,s)-\sum_{k\in\Z\smallsetminus \{0\}} y^{1/2}K_{s-1/2}(2\pi |k| y) F_{-k}(w,s)e(kx)
\end{equation}
where $\E_\infty^{\mathrm{par}}(w,s)$ is the parabolic Eisenstein series associated to the cusp at $\infty$ of $\Gamma$, see the next section for its full description.

Function $G_s(z,w)$ is unbounded as $z\to w$ and, according to Proposition 6.5. from \cite{He83} we have the asymptotics
$$
G_s(z,w)=\frac{\mathrm{ord}(w)}{2\pi}\log|z-w|+O(1),\quad \text{as}\quad z\to w.
$$

\section{Eisenstein series and their Kronecker limit formulas}

The purpose of this section is two-fold.  First, we state the definitions of parabolic and elliptic Eisenstein
series as well as their associated Kronecker limit formulas.  Specific examples of the parabolic Kronecker limit
formulas are recalled from \cite{JST13}.  Second, we prove the factorization theorem for meromorphic forms in
terms of elliptic Kronecker limit functions, as stated in \eqref{ell kroneck limit one cusp2}.

\subsection{Parabolic Kronecker limit functions}

Associated to the cusp at $\infty$ of $\Gamma$ one has a parabolic Eisenstein series ${\cal E}^{\mathrm{par}}_{\infty}(z,s)$.
Let $\Gamma_{\infty}$ denote the stabilizer subgroup within $\Gamma$ of $\infty$.  For $z\in \h$ and $s \in \mathbb{C}$ with $\textrm{Re}(s) > 1$,
${\cal E}^{\mathrm{par}}_{\infty}(z,s)$ is defined by the series
\begin{equation*}
{\cal E}^{\mathrm{par}}_{\infty}(z,s) =
\sum\limits_{\gamma \in \Gamma_{\infty}\backslash \Gamma}\textrm{Im}(\gamma z)^{s}.
\end{equation*}
It is well-known that ${\cal E}^{\mathrm{par}}_{\infty}(z,s)$ admits a meromorphic continuation
to all $s\in \CC$ and a functional equation in $s$.

For us, the Kronecker limit formula means the determination of the constant term in the Laurent expansion
of ${\cal E}^{\mathrm{par}}_{\infty}(z,s)$ at $s=1$.
Classically, Kronecker's limit formula  is the assertion that for $\Gamma = \textrm{PSL}(2,\mathbb{Z})$
one has  that
\begin{equation}\label{PSL2_KLF}
\mathcal{E}^{\mathrm{par}}_{\infty}(z,s)=
\frac{3}{\pi(s-1)}
-\frac{1}{2\pi}\log\bigl(|\Delta(z)|\Im(z)^{6}\bigr)+C+O(s-1)
\,\,\,\text{\rm as} \,\,\,
s \rightarrow 1.
\end{equation}
where $C=6(1-12\,\zeta'(-1)-\log(4\pi))/\pi$ and $\Delta(z)$ is Dedekind's Delta function which defined by
\begin{equation}\label{PSL2_Delta}
\Delta(z) = \left[q_{z}^{1/24}\prod\limits_{n=1}^{\infty}\left(1 - q_{z}^{n}\right)\right]^{24} = \eta(z)^{24}.
\end{equation}
We refer to \cite{Siegel80} for a proof of \eqref{PSL2_KLF}, though the above formulation
follows the normalization from \cite{JST13}.

For general Fuchsian groups of the first kind, Goldstein \cite{Go73} studied analogues of the Kronecker's limit formula associated to parabolic Eisenstein series.  After a slight renormalization and trivial generalization, Theorem 3-1 from \cite{Go73} asserts that the parabolic
Eisenstein series $\E^{\mathrm{par}}_{\infty}(z,s)$ admits the Laurent expansion
\begin{equation} \label{KronLimitPArGen}
\E^{\mathrm{par}}_{\infty}(z,s)= \frac{1}{\vol_{\hyp}(M) (s-1)} + \beta- \frac{1}{\vol_{\hyp}(M)} \log (\vert\eta_{\infty}^4(z)\vert \Im(z)) + O(s-1),
\end{equation}
as  $s \to 1$ and where $\beta=\beta_{\Gamma}$ is a certain real constant depending only on the group $\Gamma$.
As the notation suggests, the function $\eta_{\infty}(z)$ is a holomorphic form for $\Gamma$ and can be viewed as
a generalization of the eta function $\eta(z)$ which is defined in \eqref{PSL2_Delta} for the full modular group.

By employing the functional equation for the parabolic Eisenstein series, as stated in Theorem 6.5 of  \cite{Iwa02},
one can re-write the Kronecker limit formula as stating that
\begin{equation} \label{KronLimas s to 0}
\E^{\mathrm{par}}_{\infty}(z,s)= 1+ \log (\vert \eta_{\infty}^4(z)\vert \Im(z))\cdot s +  O(s^2) \quad\text{  as  } s \to 0,
\end{equation}
see Corollary 3 of \cite{JvPS19}. In this formulation, we will call the function
$$
P(z)=P_{\Gamma}(z):=\log (\vert \eta_{\infty}^4(z)\vert \Im(z))
$$
the parabolic Kronecker limit function of $\Gamma$.

\subsection{Atkin-Lehner groups} \label{sect Atkin Leh groups_KLF}

Let $N=p_1\cdot \ldots \cdot p_r$ be a positive squarefree number, which includes the possibility that $N=1$ and set
$$
\ell_N = 2^{1-r}\textrm{lcm}\Big(4,\ 2^{r-1}\frac{24}{(24,\sigma(N))}\Big)
$$
where $\textrm{lcm}$ stands for the least common multiple of two numbers. In \cite{JST13}, Theorem 16, it is proved that
\begin{equation}\label{DeltaN}
\Delta_N(z):=\left( \prod_{v \mid N} \eta(v z) \right)^{\ell_N}
\end{equation}
is a weight $k_N=2^{r-1} \ell_N$ holomorphic form for $\Gamma_0(N)^+$ vanishing only at the cusp. By the valence formula,
the order of vanishing of $\Delta_N(z)$ at the cusp is $\nu_N:=k_N \vol_{\hyp}(Y_N^{+})/(4\pi)$
where $\vol_{\hyp}(Y_N^{+})=\pi\sigma(N)/(3\cdot 2^r)$ is the hyperbolic volume of the surface $Y_N^{+}$.

The Kronecker limit formula \eqref{KronLimitPArGen} for the parabolic Eisenstein series $\E^{\mathrm{par},N}_{\infty}(z,s)$
associated to $Y_N^{+}$ reads as
\begin{equation} \label{KronLimitPArGen - level N}
\E^{\mathrm{par},N}_{\infty}(z,s)= \frac{1}{\vol_{\hyp}(Y_N^{+}) (s-1)} + \beta_N - \frac{1}{\vol_{\hyp}(Y_N^{+})}P_N(z) + O((s-1))
\end{equation}
as  $s \to 1$.  From Example 7 and Example 4 of \cite{JvPS19} we have the explicit evaluations of $\beta_{N}$ and $P_{N}(z)$.
Namely,
\begin{equation} \label{betaN}
\beta_N=- \frac{1}{\vol_{\hyp} (Y_N^{+}) }\left( \sum_{j=1}^{r} \frac{(p_j -1)\log p_j}{2(p_j+1)}- \log N + 2\log (4\pi) + 24\zeta'(-1) - 2\right)
\end{equation}
and the parabolic Kronecker limit function $P_N(z)$ is given by
$$
P_N(z)= \log\left( \sqrt[2^r]{\prod_{v \mid N}  \vert \eta(vz)\vert ^4} \cdot \Im(z) \right).
$$

\subsection{Elliptic Kronecker limit functions}\label{ell_Eisen_series}

Elliptic subgroups of $\Gamma$ have finite order and a unique fixed point within $\mathbb H$. For all
but a finite number of $w \in \mathcal{F}$, the order of the elliptic subgroup $\Gamma_{w}$ which fixes $w$ is one.
For $z\in \h$ with $z\not=w$ and $s \in \mathbb{C}$ with $\textrm{Re}(s) > 1$, the elliptic Eisenstein series
${\cal E}^{\textrm{ell}}_{w}(z,s)$ is defined by the series
\begin{equation}\label{ell_eisen}
{\cal E}^{\textrm{ell}}_{w}(z,s) =\sum\limits_{\gamma \in \Gamma_{w}\backslash \Gamma}
\sinh(d_{\mathrm{hyp}}(\gamma z, w))^{-s} =
\sum\limits_{\gamma \in \Gamma_{w}\backslash \Gamma}
\left( \frac{2\,\textrm{Im}(w)\textrm{Im}(\gamma z)}{|\gamma z-w|\,|\gamma z-\overline{w}|} \right)^s.
\end{equation}
It was first shown in \cite{vP10} that \eqref{ell_eisen} admits a meromorphic continuation to all $s \in \CC$.

The analogue of the Kronecker limit formula for ${\cal E}^{\textrm{ell}}_{w}(z,s)$ was first proved in \cite{vP10}; see also \cite{JvPS19}.
In the setting of this paper, it is shown in \cite{vP10}
that for any $w\in\mathcal{F}$ the series \eqref{ell_eisen} admits the Laurent expansion
\begin{multline}\label{Kronecker_elliptic}
\mathrm{ord}(w)\,\mathcal{E}^{\mathrm{ell}}_{w}(z,s)-
\frac{2^{s}\sqrt{\pi}\,\Gamma(s-\frac{1}{2})}{\Gamma(s)}\mathcal{E}^{\mathrm{par}}_{\infty}(w,1-s)
\,\mathcal{E}^{\mathrm{par}}_{\infty}(z,s)= \\
=-\frac{2\pi}{\vol_{\hyp}(M)}
-\frac{2\pi}{\vol_{\hyp}(M)}\log\bigl(|H_{\Gamma}(z,w)|^{\mathrm{ord}(w)}\Im(z)\bigr)\cdot s+O(s^2)\quad\textrm{as $s \rightarrow 0$.}
\end{multline}
As a function of $z$, $H(z,w):=H_{\Gamma}(z,w)$ is  holomorphic on $\mathbb{H}$ and uniquely determined up to multiplication by a complex constant of absolute value one;
in addition, $H(z,w)$ is an automorphic form with a non-trivial multiplier system, which depends on $w$, with respect to
$\Gamma$ acting on $z$.  The function $H(z,w)$ vanishes if and only if $z=\gamma w$ for some $\gamma\in\Gamma$.
We call the function
$$
E_w(z)=E_{w,\Gamma}(z):=\log\bigl(|H(z,w)|^{\mathrm{ord}(w)}\Im(z)\bigr)
$$
the elliptic Kronecker limit function of $\Gamma$ at $w$.

\subsection{A factorization theorem}

We can now prove equation \eqref{ell kroneck limit one cusp2}.

\begin{proposition} \label{prop: factorization}
With notation as above, let
$f$ be a weight $2k$ meromorphic form on $\h$ with $q$-expansion at $\infty$ given by
\begin{equation} \label{q exp. of f_2k}
f(z)= 1+ \sum_{n=1}^{\infty} b_{f}(n)q_z^n,
\end{equation}
Let $\mathrm{ord}_w(f)$ denote the order $f$ at $w$ and define the function
$$
H_{f}(z):= \prod_{w \in \mathcal{F}} H(z,w)^{\mathrm{ord}_w(f)}
$$
where $H(z,w)=H_{\Gamma}(z,w)$ is given in  \eqref{Kronecker_elliptic}.  Then there exists a complex constant $c_{f}$ such that
\begin{equation} \label{factorization fla}
f(z) = c_{f}H_{f}(z).
\end{equation}
Furthermore,
$$
\abs{c_{f}} = \exp \left(-\frac{2\pi}{\vol_{\hyp}(M)} \sum_{w\in \mathcal{F}} \frac{\mathrm{ord}_w(f)}{\mathrm{ord}(w)}
\left( 2-\log 2 + P(w)- \beta\vol_{\hyp}(M)\right) \right ),
$$
where $P(w)$ and $\beta$ are defined through the parabolic Kronecker limit function
\eqref{KronLimitPArGen}.
\end{proposition}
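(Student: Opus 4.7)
The plan is to read the identity off from the two ingredients already present in the paper: the elliptic Kronecker limit formula \eqref{Kronecker_elliptic}, which encodes the function $H(z,w)$, and the identity \eqref{ell kroneck limit one cusp} from \cite{JvPS19}, which expresses $\log\Vert f\Vert(z)$ as the $s$-coefficient of $\sum_w \mathrm{ord}_w(f)\,\mathcal{E}^{\mathrm{ell}}_w(z,s)$ at $s=0$. Comparing these two expressions coefficient by coefficient produces an explicit formula for $\log|f(z)|$ in terms of $\log|H_f(z)|$ and explicit constants, from which the proposition follows.

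First I would expand $\mathrm{ord}(w)\,\mathcal{E}^{\mathrm{ell}}_w(z,s)$ to order $s^1$ around $s=0$ using \eqref{Kronecker_elliptic}. The subtracted product $\tfrac{2^s\sqrt{\pi}\Gamma(s-1/2)}{\Gamma(s)}\,\mathcal{E}^{\mathrm{par}}_{\infty}(w,1-s)\,\mathcal{E}^{\mathrm{par}}_{\infty}(z,s)$ is regular at $s=0$: the Gamma ratio vanishes linearly at $s=0$ (since $\Gamma(-1/2)=-2\sqrt{\pi}$ and $1/\Gamma(s)$ has a simple zero there), the factor $\mathcal{E}^{\mathrm{par}}_{\infty}(w,1-s)$ has a simple pole at $s=0$ read off from \eqref{KronLimitPArGen} with $s\mapsto 1-s$, and the factor $\mathcal{E}^{\mathrm{par}}_{\infty}(z,s)$ is expanded by \eqref{KronLimas s to 0}. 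The constant-in-$s$ term of the product matches the $-2\pi/\vol_{\hyp}(M)$ appearing on the right of \eqref{Kronecker_elliptic} (a consistency check coming from the expansion of $2^s=1+s\log 2+\cdots$), and the coefficient of $s^1$ is an explicit linear combination of $\log|H(z,w)|^{\mathrm{ord}(w)}\Im(z)$, $P(z)$, $P(w)$, $\beta\,\vol_{\hyp}(M)$, $\log 2$, and the constant $2$.

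Next I would multiply the resulting $s^1$-coefficient by $\mathrm{ord}_w(f)/\mathrm{ord}(w)$ and sum over $w\in\mathcal F$. The valence formula for a weight $2k$ form with $\mathrm{ord}_\infty(f)=0$, namely $\sum_{w\in\mathcal F}\mathrm{ord}_w(f)/\mathrm{ord}(w)=k\,\vol_{\hyp}(M)/(2\pi)$, collapses the terms which are independent of $w$ into multiples of $k$. Comparing with \eqref{ell kroneck limit one cusp}, which identifies the same weighted sum with $-\log\bigl(|f(z)|\,|\eta_\infty^4(z)|^{-k}\bigr)$, and using $P(z)=\log(|\eta_\infty^4(z)|\Im(z))$ to cancel the $z$-dependent terms $k\log|\eta_\infty^4(z)|$ and $k\log\Im(z)$ appearing symmetrically on both sides, yields the identity
\[
\log|f(z)|=\log|H_f(z)|-\frac{2\pi}{\vol_{\hyp}(M)}\sum_{w\in\mathcal F}\frac{\mathrm{ord}_w(f)}{\mathrm{ord}(w)}\bigl(2-\log 2+P(w)-\beta\,\vol_{\hyp}(M)\bigr).
\]

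Finally, this identity says that $f(z)/H_f(z)$ has constant modulus on $\mathbb H$. Because $H_f$ is built so that its divisor equals the divisor of $f$ on $\mathcal F$ (by the characterization of $H(z,w)$ as vanishing exactly on the $\Gamma$-orbit of $w$) and its transformation behavior in $z$ matches that of $f$, the quotient $f/H_f$ is a nowhere-vanishing meromorphic function on $\mathbb H$ whose modulus is a constant; hence $f/H_f$ equals a complex constant $c_f$, and the modulus $|c_f|$ is read off from the displayed identity. The main obstacle is the bookkeeping in the $s\to 0$ expansion of the subtracted product in \eqref{Kronecker_elliptic}: one must carefully combine the simple zero of $\Gamma(s-1/2)/\Gamma(s)$, the simple pole of $\mathcal{E}^{\mathrm{par}}_{\infty}(w,1-s)$, and the linear term of $2^s$ and of $\mathcal{E}^{\mathrm{par}}_{\infty}(z,s)$ to extract the $s^1$-coefficient in a form revealing the constants $2$, $\log 2$, $P(w)$, and $\beta\,\vol_{\hyp}(M)$ exactly as they appear in the stated formula for $|c_f|$.
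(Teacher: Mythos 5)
Your proposal takes a genuinely different route from the paper, and it has two gaps. First, a problem of logical order: your key input \eqref{ell kroneck limit one cusp} is, in this paper, only established for the forms $f$ of \eqref{q exp. of f_2k} \emph{after} Proposition \ref{prop: factorization}, by ``following the proof of Proposition 12 from \cite{JvPS19}''; and in \cite{JvPS19} that proposition rests on the factorization theorem (Theorem 9 there), which is precisely the statement you are asked to prove. Deriving the factorization from \eqref{ell kroneck limit one cusp} therefore inverts the intended dependence and is circular unless you supply an independent proof of \eqref{ell kroneck limit one cusp} valid for all even weights $2k\geq 0$ and meromorphic $f$ normalized as in \eqref{q exp. of f_2k}. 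The paper's own argument avoids this: it forms $F_f(z)=H_f(z)/f(z)$, shows it is a zero-free holomorphic function on $M$, bounded and nonzero at the cusp, observes that $\log|F_f|$ is harmonic and $L^2$ and hence constant by its spectral expansion, and then evaluates $|c_f|$ from the asymptotics of $H(z,w)$ as $\Im(z)\to\infty$ (Proposition 6 of \cite{JvPS19}).

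Second, the step you defer as ``bookkeeping'' is exactly where your computation fails. Using $\tfrac{2^{s}\sqrt{\pi}\,\Gamma(s-1/2)}{\Gamma(s)}=-2\pi s\bigl(1+(2-\log 2)s+O(s^2)\bigr)$, $\mathcal{E}^{\mathrm{par}}_{\infty}(w,1-s)=-\tfrac{1}{\vol_{\hyp}(M)\,s}+\beta-\tfrac{P(w)}{\vol_{\hyp}(M)}+O(s)$ and \eqref{KronLimas s to 0}, the formula \eqref{Kronecker_elliptic} as displayed gives for the $s$-coefficient of $\mathrm{ord}(w)\,\mathcal{E}^{\mathrm{ell}}_{w}(z,s)$ the value $\tfrac{2\pi}{\vol_{\hyp}(M)}\bigl(P(z)+2-\log 2-\beta\vol_{\hyp}(M)+P(w)-\log(|H(z,w)|^{\mathrm{ord}(w)}\Im(z))\bigr)$. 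Summing against $\mathrm{ord}_w(f)/\mathrm{ord}(w)$ and comparing with \eqref{ell kroneck limit one cusp}, the $z$-dependent terms $kP(z)-k\log\Im(z)$ do cancel as you say, but what survives is $\log|f(z)|=\tfrac{2\pi}{\vol_{\hyp}(M)}\log|H_f(z)|-\tfrac{2\pi}{\vol_{\hyp}(M)}\sum_{w}\tfrac{\mathrm{ord}_w(f)}{\mathrm{ord}(w)}\bigl(2-\log 2+P(w)-\beta\vol_{\hyp}(M)\bigr)$: the constant matches the asserted $|c_f|$, but the coefficient of $\log|H_f|$ is $2\pi/\vol_{\hyp}(M)$ (equal to $6$ for $\mathrm{PSL}(2,\Z)$), not $1$. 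So your comparison, carried out with the formulas you cite, yields $|f|=|c_f|\,|H_f|^{2\pi/\vol_{\hyp}(M)}$ rather than $f=c_fH_f$; to close the argument you would have to revisit the normalization of $H(z,w)$ in \eqref{Kronecker_elliptic} against \cite{vP10} and \cite{JvPS19} (where the volume factor must not multiply $\log|H(z,w)|^{\mathrm{ord}(w)}$) and redo the extraction. As it stands this is an unresolved obstruction, not routine verification, so the proposed proof is incomplete.
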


\begin{proof}
The proof closely follows the proof of Theorem 9 from \cite{JvPS19}.
Specifically, following the first part of the proof almost verbatim, we conclude that the quotient
$$
F_f(z):=\frac{H_f(z)}{f(z)}
$$
is a non-vanishing holomorphic function on $M$ which is bounded and non-zero at the cusp at $\infty$.  Hence,
$\log \vert F_{f}(z)\vert$ is $L^{2}$ on $M$. From its spectral expansion and the fact that $\log \vert F_{f}(z)\vert$
is harmonic, one concludes $\log \vert F_{f}(z)\vert$ is constant, hence so is $F_{f}(z)$. The
evaluation of the constant is obtained by considering the limiting behavior as $z$ approaches $\infty$,
which is obtained by using the
asymptotic behavior of $H(z,w)$ as $\Im(z)\to\infty$, as given in Proposition 6 of \cite{JvPS19}.
\end{proof}

By following the proof of Proposition 12 from \cite{JvPS19} we obtain \eqref{ell kroneck limit one cusp}, and hence \eqref{ell kroneck limit one cusp2}, for meromorphic
forms $f$ on $\h$ with $q$-expansion \eqref{q exp. of f_2k}.  We leave the verification of this simple argument to the reader.

\section{Proofs of main results}

\subsection{Proof of Theorem \ref{thm:main}}

Let $Y>1$ be sufficiently large so that the cuspidal neighborhood $\mathcal{F}_{\infty}(Y)$ of the cusp $\infty$ in $\mathcal{F}$ is of
the form $\{z \in \HH : 0 < x < 1, y > Y\}$.  For $s\in\C$ with $\Re(s)>1$, and arbitrary, but fixed $w\in\mathcal{F}$, we then have that
\begin{align*}
  \int\limits_{\mathcal{F}(Y)}\Delta_{\hyp}(F_{-n}(z,1))&\left(G_s(z,w) + \E_\infty^{\mathrm{par}}(w,s)\right)d\mu_{\hyp}(z) \\ &
  -\int\limits_{\mathcal{F}(Y)}F_{-n}(z,1)\Delta_{\hyp} \left(G_s(z,w) + \E_\infty^{\mathrm{par}}(w,s)\right)d\mu_{\hyp}(z) \\ &= -s(1-s)\int\limits_{\mathcal{F}(Y)}F_{-n}(z,1)\left(G_s(z,w) + \E_\infty^{\mathrm{par}}(w,s)\right)d\mu_{\hyp}(z).
\end{align*}
Actually, the first summand on the left-hand side is zero since $F_{-n}(n,1)$ is holomorphic; however, this
judicious form of the number zero is significant since we will use the method behind the Maass-Selberg theorem to
  study the left-hand side of the above equation. Before this, note that the integrand on the right-hand side of the above equation is
holomorphic at $s=1$.  As a result, we can write
\begin{align*}
\frac{\partial}{\partial s}&\left.\left(-s(1-s)\int\limits_{\mathcal{F}(Y)}F_{-n}(z,1)\left(G_s(z,w) +
\E_\infty^{\mathrm{par}}(w,s)\right)d\mu_{\hyp}(z)\right)\right|_{s=1}\\&=
\int\limits_{\mathcal{F}(Y)}F_{-n}(z,1)\lim_{s\to 1}\left(G_s(z,w) + \E_\infty^{\mathrm{par}}(w,s)\right)d\mu_{\hyp}(z).
\end{align*}
Therefore,
\begin{align}\label{M-S starting f-la}
\langle F_{-n}(z,1), &\overline{\lim_{s\to 1} \left(G_s(z,w) + \E_\infty^{\mathrm{par}}(w,s)\right)} \rangle \notag \\&=\lim_{Y\to \infty} \int\limits_{\mathcal{F}(Y)}F_{-n}(z,1)\lim_{s\to 1} \left(G_s(z,w) + \E_\infty^{\mathrm{par}}(z,s)\right)d\mu_{\hyp}(z) \notag \\&=
\lim_{Y\to \infty} \left[\frac{\partial}{\partial s}\left(  \int\limits_{\mathcal{F}(Y)}\Delta_{\hyp}(F_{-n}(z,1))\left(G_s(z,w) +
\E_\infty^{\mathrm{par}}(w,s)\right)d\mu_{\hyp}(z)  \right. \right.\notag \\&-
\left.\left. \left. \int\limits_{\mathcal{F}(Y)}F_{-n}(z,1)\Delta_{\hyp} \left(G_s(z,w) + \E_\infty^{\mathrm{par}}(w,s)\right)d\mu_{\hyp}(z)  \right) \right|_{s=1}\right]
\end{align}

The quantity on the right-hand side of \eqref{M-S starting f-la} is setup for an application of Green's theorem as
in the proof of the Maass-Selberg relations for the Eisenstein series.  As described on page 89 of \cite{Iwa02}, when
applying Green's theorem to each term on the right-side of \eqref{M-S starting f-la} for fixed $Y$, the resulting
boundary terms on the sides of the fundamental domain, which are identified by $\Gamma$, will sum to zero.  As such,
we get that
\begin{align}\label{M-S interm f-la}
\langle F_{-n}(z,1),& \overline{\lim_{s\to 1} \left(G_s(z,w) + \E_\infty^{\mathrm{par}}(w,s)\right)} \rangle \notag \\&=
\lim_{Y\to \infty} \left[\frac{\partial}{\partial s}\left(  \int\limits_{0}^{1}\frac{\partial}{\partial y}F_{-n}(z,1)\left(G_s(z,w) +
\E_\infty^{\mathrm{par}}(w,s)\right)dx \right. \right. \notag\\ &-
\left.\left. \left. \int\limits_{0}^{1}F_{-n}(z,1)\frac{\partial}{\partial y} \left(G_s(z,w) + \E_\infty^{\mathrm{par}}(w,s)\right)dx
\right) \right|_{s=1}\right],
\end{align}
where functions of $z$ and its derivatives with respect to $y=\Im(z)$ are evaluated at $z=x+iY$.

In order to compute the difference of the two integrals of the right-hand side of \eqref{M-S interm f-la}, we will use the Fourier expansions
\eqref{Four exp Nieb at 1} and \eqref{Four exp Green} of the series $F_{-n}(z,1)$ and $G_s(z,w)$ respectively. It will be more convenient to write the first coefficient in the expansion \eqref{Four exp Nieb at 1} as $e(-nx)\sqrt{y}I_{\tfrac{1}{2}}(2\pi n y)$, as in \eqref{Four exp Nieb}.

Specifically, since the exponential functions $e(-nx)$ are orthogonal for different values of $n$, we get that
\eqref{M-S interm f-la} is equal to
\begin{multline*}
-F_{-n}(w,s)\sqrt{Y} \left(\frac{\partial}{\partial y}\left.\left(\sqrt{y}I_{\tfrac{1}{2}}(2\pi n y)\right)\right|_{y=Y}\cdot
K_{s-\tfrac{1}{2}}(2\pi n Y) \right. \\ \left.- I_{\tfrac{1}{2}}(2\pi n Y)\cdot \frac{\partial}{\partial y}\left.\left(\sqrt{y}K_{s-
\tfrac{1}{2}}(2\pi n y) \right)\right|_{y=Y}\right)
\end{multline*}
\begin{multline*}
+B_0(1;-n)(1-s)\frac{Y^{-s}}{2s-1}\E_\infty^{\mathrm{par}}(w,s) \\+
\sum_{j\in\Z\smallsetminus\{0\}}F_j(w,s) \left( b_j(Y,1;-n)\cdot \frac{\partial}{\partial y}\left.\left(\sqrt{y}K_{s-
\tfrac{1}{2}}(2\pi |j| y) \right)\right|_{y=Y} \right. \\ \left.-\frac{\partial}{\partial y}\left.b_j(y,1;-n)\right|_{y=Y}\cdot \sqrt{Y}
K_{s-\tfrac{1}{2}}(2\pi |j| Y) \right) =T_1(Y,s;w) + T_2(Y,s;w)+T_3(Y,s;w),
\end{multline*}
where the last equality above provides the definitions of the functions $T_{1}$, $T_{2}$ and $T_{3}$.  Therefore, from
\eqref{M-S interm f-la} we conclude that
\begin{multline}\label{Three terms pre-comp}
  \langle F_{-n}(z,1), \overline{\lim_{s\to 1} \left(G_s(z,w) + \E_\infty^{\mathrm{par}}(w,s)\right)} \rangle =\\= \lim_{Y\to \infty}
  \left[\left.\frac{\partial}{\partial s}\left( T_1(Y,s;w) + T_2(Y,s;w)+T_3(Y,s;w) \right)\right|_{s=1}\right]
\end{multline}
We will treat each of the three terms on the right-hand side of \eqref{Three terms pre-comp} separately.

To evaluate the term $T_{1}$ in \eqref{Three terms pre-comp}, we apply
formulas 8.486.2 and 8.486.11 of \cite{GR07} in order to compute derivatives of the Bessel functions.
In doing so, we conclude that
$$
T_1(Y,s;w)=-\frac{X}{2}F_{-n}(w,s)\left[K_{s-\tfrac{1}{2}}(X)(I_{-\tfrac{1}{2}}(X) + I_{\tfrac{3}{2}}(X)) + I_{\tfrac{1}{2}}(X)(K_{s-\tfrac{3}{2}}(X) + K_{s+\tfrac{1}{2}}(X)) \right],
$$
where we set $X=2\pi n Y$.  Next, we express $K_{s+\tfrac{1}{2}} (X)$ in terms of $K_{s-\tfrac{1}{2}} (X)$ and $K_{s-\tfrac{3}{2}}(X)$, using formula 8.485.10 from \cite{GR07} to get
$$
K_{s+\tfrac{1}{2}} (X)= K_{s-\tfrac{3}{2}} (X) + \frac{2s-1}{X}K_{s-\tfrac{1}{2}} (X).
$$
Then, applying formula 8.486.21 from \cite{GR07}, we deduce that
\begin{align*}
\frac{\partial}{\partial s}&\left.\left[K_{s-\tfrac{1}{2}}(X)(I_{-\tfrac{1}{2}}(X) + I_{\tfrac{3}{2}}(X)) + I_{\tfrac{1}{2}}(X)(K_{s-\tfrac{3}{2}}(X) + K_{s+\tfrac{1}{2}}(X)) \right]\right|_{s=1}\\&= \sqrt{\frac{\pi}{2X}}e^X\mathrm{Ei}(-2X)\left[ -(I_{-\tfrac{1}{2}}(X) + I_{\tfrac{3}{2}}(X)) + \sqrt{\frac{2}{\pi X}} (2-1/X) \sinh (X) \right]+\frac{2}{X^2}e^{-X}\sinh (X),
\end{align*}
where $\mathrm{Ei}(x)$ denotes the exponential integral; see section 8.2 of  \cite{GR07}.  Continuing, we now employ
formula (B.36) from \cite{Iwa02} which asserts certain asymptotic behavior of the $I$-Bessel function as $X\to\infty$;
we are interested in the cases when $\nu=-1/2$ and when $\nu=3/2$.
This result, together with the bound $\mathrm{Ei}(-2X)\leq e^{-2X}/(2X) $, which follows from the expression 8.212.10 from \cite{GR07} for $\mathrm{Ei}(-x)$ with $x>0$, yields that
$$
\lim_{X\to\infty}\frac{X}{2}\frac{\partial}{\partial s}\left.\left[K_{s-\tfrac{1}{2}}(X)(I_{-\tfrac{1}{2}}(X) +
I_{\tfrac{3}{2}}(X)) + I_{\tfrac{1}{2}}(X)(K_{s-\tfrac{3}{2}}(X) + K_{s+\tfrac{1}{2}}(X)) \right]\right|_{s=1} =0.
$$
Therefore,
\begin{multline*}
\lim_{Y\to \infty}\frac{\partial}{\partial s}\left. T_1(Y,s;w)\right|_{s=1}= - \frac{\partial}{\partial s}\left. F_{-n}(w,s)\right|_{s=1}\cdot \\\cdot\lim_{X\to \infty}\frac{X}{2}\left[K_{s-\tfrac{1}{2}}(X)(I_{-\tfrac{1}{2}}(X) + I_{\tfrac{3}{2}}(X)) + I_{\tfrac{1}{2}}(X)(K_{s-\tfrac{3}{2}}(X) + K_{s+\tfrac{1}{2}}(X)) \right].
\end{multline*}
Finally, by applying (B.36) from \cite{Iwa02} again, we deduce that
$$
\lim_{X\to \infty}\frac{X}{2}\left[K_{s-\tfrac{1}{2}}(X)(I_{-\tfrac{1}{2}}(X) + I_{\tfrac{3}{2}}(X)) + I_{\tfrac{1}{2}}(X)
(K_{s-\tfrac{3}{2}}(X) + K_{s+\tfrac{1}{2}}(X)) \right]=1.
$$
Hence
\begin{equation}\label{term1}
\lim_{Y\to \infty}\frac{\partial}{\partial s}\left. T_1(Y,s;w)\right|_{s=1}= - \frac{\partial}{\partial s}\left. F_{-n}(w,s)\right|_{s=1}.
\end{equation}

As for the term $T_{2}$ in \eqref{Three terms pre-comp}, let us use
the Laurent series expansion \eqref{KronLimitPArGen} of $\E_\infty^{\mathrm{par}}(w,s)$, from
which one easily deduces that
$$
\frac{\partial}{\partial s}\left.(s-1)\frac{Y^{-s}}{2s-1}\E_\infty^{\mathrm{par}}(w,s)\right|_{s=1} = \frac{1}{Y}\left( \beta -
\frac{P(w)+2+\log Y}{\vol_{\hyp}(M)}\right).
$$
Therefore
\begin{equation}\label{term2}
\lim_{Y\to \infty}\frac{\partial}{\partial s}\left. T_2(Y,s;w)\right|_{s=1}=0.
\end{equation}

It remains to study the term $T_{3}$ in \eqref{Three terms pre-comp}.
Let us set $g(s,y,k):=\sqrt{y}K_{s-\tfrac{1}{2}}(2\pi k y)$ for some positive integers $k$. Then $b_j(y,1;-n)=B_j(1;-n)g(1,y,|n|)$ and
\begin{multline*}
T_3(Y,s;w)=\sum_{j\in\Z\smallsetminus\{0\}} B_j(1;-n) F_j(w,s) \left( g(1,Y,|n|) \frac{\partial}{\partial y}\left.g(s,y,|j|)\right|_{y=Y} \right. \\ - \left. g(s,Y;|j|)\frac{\partial}{\partial y}\left.g(1,y,|n|)\right|_{y=Y} \right).
\end{multline*}

For positive integers $m$ and $\ell$ let us define
$$
G(s,Y,m,\ell):=g(1,Y,m) \frac{\partial}{\partial y}\left.g(s,y,\ell)\right|_{y=Y}- g(s,Y;\ell)\frac{\partial}{\partial y}\left.g(1,y,m)\right|_{y=Y}.
$$
Applying the formula 8.486.11 from \cite{GR07} to differentiate the $K-$Bessel function, together with formula 8.486.10 to express $K_{s+\tfrac{1}{2}}(2\pi |j|Y)$ we arrive at
\begin{multline*}
G(s,Y,|n|,|j|) =\frac{\pi Y}{2}K_{s-\tfrac{1}{2}}(2\pi |j| Y)K_{\tfrac{1}{2}}(2\pi |n| Y) \cdot \\ \cdot \left( |n|(K_{-\tfrac{1}{2}}(2\pi |n|Y) + K_{\tfrac{3}{2}}(2\pi |n|Y)) - |j|\left(2K_{s-\tfrac{3}{2}}(2\pi |j|Y) +\frac{2s-1}{2\pi |j| Y} K_{s-\tfrac{1}{2}}(2\pi |j|Y)\right)\right).
\end{multline*}

Now, we combine the bound (B.36) from \cite{Iwa02} with evaluation of the derivative $\frac{\partial}{\partial \nu} K_{\nu}$ at $\nu=\pm 1/2$ (formula 8.486.21 of \cite{GR07}) and the bound $\mathrm{Ei}(-4\pi |j|Y) \leq \exp(-4\pi |j|Y)/(4\pi |j|Y)$ for the exponential integral function to deduce the following crude bounds
$$
\max\left\{G(s,Y,|n|,|j|), \left.\frac{\partial }{\partial s} G(s,Y,|n|,|j|)\right|_{s=1}\right\}\ll (|n| + |j|)\exp(-2\pi Y (|n|+|j|)), \text{   as   } Y\to +\infty,
$$
where the implied constant is independent of $Y,|j|$.

This, together with the bound \eqref{eq: N-P deriv bound} and the Fourier expansion \eqref{Four exp Nieb at 1} yields
$$
\frac{\partial}{\partial s} T_3(Y,s;w)\Big|_{s=1}\ll
\sum_{j\in\Z\smallsetminus\{0\}}(|n| + |j|) | B_j(1;-n)| \exp\left( -2\pi Y (|n|+|j|) + 2\pi|j| \Im(w)\right)
$$
It remains to estimate the sum on the right hand  side of the above equation as $Y\to\infty$.
The bounds for the Kloosterman sum zeta function  as stated on page 75 of \cite{Iwa02})
yield bounds for $B_j(1;-n)$ for $j\neq 0$.  Specifically, one has that
$$
B_j(1;-n)\ll \exp\left(\frac{4\pi \sqrt{|jn|}}{c_\Gamma}\right)
$$
where $c_\Gamma$ is a certain positive constant depending on the group $\Gamma$; in fact, $c_{\Gamma}$ is equal to the minimal positive
left-lower entry of a matrix from $\Gamma$.  Also, the implied constant in the bound for $B_j(1;-n)$ is independent of $j$. Therefore
$$
\frac{\partial}{\partial s}\left. T_3(Y,s;w)\right|_{s=1}\ll \sum_{j\in\Z\smallsetminus\{0\}} (|n| + |j|)\exp\left( -2\pi \left( (|j|+|n|)Y - 2\sqrt{|jn|}/c_\Gamma -|j| \Im(w) \right)  \right).
$$
For $Y > 2\Im(w) + 2\sqrt{n}/c_\Gamma$, this series over $j$ is uniformly convergent and is $o(1)$ as $Y\to\infty$.
In other words,
\begin{equation}\label{T3_limit}
\lim_{Y\to\infty}\frac{\partial}{\partial s}\left. T_3(Y,s;w)\right|_{s=1} =0.
\end{equation}
When combining \eqref{T3_limit} with  \eqref{Three terms pre-comp} \eqref{term1} and \eqref{term2},
we have that
\begin{equation*}
  \langle F_{-n}(z,1), \overline{\lim_{s\to 1} \left(G_s(z,w) + \E_\infty^{\mathrm{par}}(w,s)\right)} \rangle = \frac{\partial}{\partial s}\left. F_{-n}(w,s) \right|_{s=1},
\end{equation*}
which completes the proof of \eqref{main f-la}.

\subsection{Proof of Corollary \ref{Rohr-Jensen}}

The proof of Corollary \ref{Rohr-Jensen} is a combination of Theorem \ref{thm:main} and the factorization theorem as stated in
Proposition \ref{prop: factorization}.  The details are as follows.

To begin we shall prove formula \eqref{log norm basic}.  Starting with \eqref{ell kroneck limit one cusp2}, which is written as
$$
\log\left( y^k |f(z)| \right) = kP(z) -  \sum_{w\in \mathcal{F}} \frac{\mathrm{ord}_w(f)}{\mathrm{ord}(w)} \lim_{s\to 0} \frac{1}{s} \mathrm{ord}(w)\mathcal{E}^{\mathrm{ell}}_{w}(z,s),
$$
we can express $\lim_{s\to 0} \frac{1}{s}\mathrm{ord}(w) \mathcal{E}^{\mathrm{ell}}_{w}(z,s)$ in terms of the resolvent kernel.
Specifically, using \eqref{Ell Green connection}, we have that
\begin{equation} \label{ell kroneck limit one cusp3}
\log\left( y^k |f(z)| \right)=kP(z) + \sum_{w\in \mathcal{F}} \frac{\mathrm{ord}_w(f)}{\mathrm{ord}(w)} \lim_{s\to 0}
\left(\frac{2^{s}\sqrt{\pi} \Gamma(s-1/2)}{\Gamma(s+1)}(2s-1)G_s(z,w)\right).
\end{equation}
By applying the functional equation for the Green's function, see Theorem 3.5 of \cite{He83} on pages 250--251, we get
\begin{align*}
\lim_{s\to 0}\frac{2^{s}\sqrt{\pi} \Gamma(s-1/2)}{\Gamma(s+1)}(2s-1)G_s(z,w) =
\lim_{s\to 1}&\left(\frac{2^{1-s}\sqrt{\pi} \Gamma(1/2-s)}{\Gamma(2-s)}\left((1-2s)G_s(z,w)\right.\right.
\\&- \left.\left.\frac{}{}\E_\infty^{\mathrm{par}}(z,1-s)\E_\infty^{\mathrm{par}}(w,s)\right)\right).
\end{align*}
From the Kronecker limit formula \eqref{KronLimas s to 0} and standard Taylor series expansion of the gamma function we immediately deduce that
\begin{multline*}
\lim_{s\to 0}\frac{2^{s}\sqrt{\pi} \Gamma(s-1/2)}{\Gamma(s+1)}(2s-1)G_s(z,w)= \lim_{s\to 1}2\pi (-1+(s-1)(2-\log 2))\cdot \\ \cdot \left[2(1-s)G_s(z,w) - (G_s(z,w)+\E_\infty^{\mathrm{par}}(w,s)) - P(z)(1-s)\E_\infty^{\mathrm{par}}(w,s)\right].
\end{multline*}
According to \cite{Iwa02}, p. 106, the point $s=1$ is the simple pole of $G_s(z,w)$ with the residue $-1/\vol_{\hyp}(M)$ (note that our $G_s(z,w)$ differs from the automorphic Green's function from \cite{Iwa02} by a factor of $-1$). Therefore, the Kronecker limit formula \eqref{KronLimitPArGen} yields the following equation
\begin{align} \label{Limit of G as s to 0}
\lim_{s\to 0}\frac{2^{s}\sqrt{\pi} \Gamma(s-1/2)}{\Gamma(s+1)}(2s-1)G_s(z,w)&= -\frac{2\pi}{\vol_{\hyp}(M)}P(z)- \frac{4\pi}{\vol_{\hyp}(M)} \\&
+ 2\pi\lim_{s\to 1}\left(G_s(z,w) + \E_\infty^{\mathrm{par}}(w,s)\right).\notag
\end{align}
Recall that the classical the Riemann-Roch theorem implies that
$$
k\frac{\vol_{\hyp}(M)}{2\pi}=\sum_{w\in \mathcal{F}} \frac{\mathrm{ord}_w(f)}{\mathrm{ord}(w)};
$$
hence, after multiplying \eqref{Limit of G as s to 0} by $\frac{\mathrm{ord}_w(f)}{\mathrm{ord}(w)} $ and taking the sum over all $w\in\mathcal{F}$
from \eqref{ell kroneck limit one cusp3}, we arrive at \eqref{log norm basic}, as claimed.

Having proved \eqref{log norm basic}, observe that the left-hand side of \eqref{log norm basic} is real valued.
As proved in \cite{Ni73}, $F_{-n}(z,1)$ is orthogonal to constant functions.  Therefore, in order to prove \eqref{main f-la - corollary}
one simply applies \eqref{main f-la}, which was established above.

\subsection{Proof of Corollary \ref{cor:j-function}}
In order to prove \eqref{main f-la - corollary 2}, it suffices to compute
$\langle 1, \overline{\lim_{s\to 1}(G_s(z,w) + \E_{\infty}^{\mathrm{par}}(w,s))} \rangle $, which we will
write as
$$
\int\limits_{\mathcal{F}}\lim_{s\to 1}\left(G_s(z,w) + \frac{1}{\vol_{\hyp}(M)(s-1)}+ \E_{\infty}^{\mathrm{par}}(w,s)-\frac{1}{\vol_{\hyp}(M)(s-1)}\right)d\mu_{\hyp}(z).
$$
From its spectral expansion, the function $\lim_{s\to 1}\left(G_s(z,w) + \frac{1}{\vol_{\hyp}(M)(s-1)}\right)$ is $L^2$ on $\mathcal{F}$ and orthogonal to constant functions.
Therefore, by using the Laurent series expansion \eqref{KronLimitPArGen}, we get that
$$
\langle 1, \overline{\lim_{s\to 1}(G_s(z,w) + \E_{\infty}^{\mathrm{par}}(w,s))} \rangle =\vol_{\hyp}(M)\left(\beta - \frac{P(w)}{\vol_{\hyp}(M)}\right),
$$
which completes the proof.

\subsection{Proof of Theorem \ref{thm:generating series}}\label{sect proof of thm 4}

Our starting point is the Fourier expansion of the sum $G_s(z,w) + \E_\infty^{\mathrm{par}}(w,s)$.
Namley, for $\Re(s)>1$ and $\Im(w)$ sufficiently large we have that
\begin{align}\label{G+Epar}
G_s(z,w) + \E_\infty^{\mathrm{par}}(w,s)&=\left(1-\frac{y^{1-s}}{2s-1}\right)\E_\infty^{\mathrm{par}}(w,s) \notag
\\&-\sum_{k\in\mathbb{Z}\setminus\{0\}} \sqrt{y}K_{s-\tfrac{1}{2}}(2\pi |k| y)F_{-k}(w,s)e(kx).
\end{align}
If $\Im(z)$ is sufficiently large, exponential decay of $K_{s-\tfrac{1}{2}}(2\pi |k| y)$ is sufficient to ensure that the right-hand side of \eqref{G+Epar} is holomorphic at $s=1$. The Laurent series expansion of $\E_\infty^{\mathrm{par}}(w,s)$, combined with the expansions $y^{1-s}=1+(1-s)\log y + \tfrac{1}{2}(1-s)^2 \log^2y + O((1-s)^3)$ and $(2s-1)^{-1}= (1-2(s-1))^{-1} = 1-2(s-1)+4(s-1)^2 + O((s-1)^3)$ yields
\begin{multline*}
\frac{\partial}{\partial s}\left.\left(1-\frac{y^{1-s}}{2s-1}\right)\E_\infty^{\mathrm{par}}(w,s) \right|_{s=1} =\frac{1}{\vol_{\hyp}(M)}\left[ -4+2\beta\vol_{\hyp}(M)-2P(w)\right. \\ \left. +
\log y \left(\beta\mathrm{vol}_{\hyp}(M) - P(w)-2\right) - \tfrac{1}{2}\log^2y \right].
\end{multline*}
Additionally, for $\Im(z)$ sufficiently large, the series on the right-hand side of \eqref{G+Epar} is a uniformly convergent series of functions
which are holomorphic at $s=1$.  As such, we may differentiate the series  term by term.  By employing formulas 8.469.3 and 8.486.21 of \cite{GR07},
we deduce for $k\neq0$ that
\begin{multline*}
\frac{\partial}{\partial s}\left. \left(\sqrt{y}K_{s-\tfrac{1}{2}}(2\pi |k| y)F_{-k}(w,s) \right) \right|_{s=1}=\frac{e^{-2\pi|k|y}}{2\sqrt{|k|}}\cdot \\ \cdot\left[ \frac{\partial}{\partial s}\left. F_{-k}(w,s) \right|_{s=1}- F_{-k}(w,1)e^{4\pi|k|y} \mathrm{Ei}(-4\pi |k|y)\right],
\end{multline*}
where $\mathrm{Ei}(x)$ denotes the exponential integral function; see section 8.21 of \cite{GR07}.  From this, we get the
expression that
\begin{multline*}
\frac{\partial}{\partial s}\left(G_s(z,w) + \E_\infty^{\mathrm{par}}(w,s)\right)\Big|_{s=1} = (\log y+2) \left(\beta - \frac{P(w)+2}
{\mathrm{vol}_{\hyp}(M)}\right) - \frac{\log^2 y}{2\mathrm{vol}_{\hyp}(M)} \\- \sum_{k\in\mathbb{Z}\setminus\{0\}}\frac{1}{2\sqrt{|k|}}\left[ \frac{\partial}{\partial s}
\left. F_{-k}(w,s) \right|_{s=1}- F_{-k}(w,1)e^{4\pi|k|y} \mathrm{Ei}(-4\pi |k|y)\right]e^{2\pi i kx - 2\pi|k|y}.
\end{multline*}
Let us now compute the derivative $\frac{\partial}{\partial z}$ of the above expression.
After multiplying by $i=\sqrt{-1}$, we get that
\begin{multline*}
\mathcal{G}_w(z)=\frac{1}{y}\left(\beta - \frac{P(w)+2}{\mathrm{vol}_{\hyp}(M)}\right) -  \frac{\log y}{y \mathrm{vol}_{\hyp}(M)} +\sum_{k\geq 1} 2\pi \sqrt{k} \frac{\partial}{\partial s}\left. F_{-k}(w,s) \right|_{s=1}q_z^k  \\ +\sum_{k\geq 1}\frac{F_{-k}(w,1)}{2\sqrt{k}y}q_z^k -\sum_{k\leq -1} 2\pi \sqrt{|k|} F_{-k}(w,1)
\mathrm{Ei}(4\pi ky) q_z^k + \sum_{k\leq -1}\frac{F_{-k}(w,1)}{2\sqrt{|k|}y}e^{2\pi i k(x-iy)}.
\end{multline*}
The proof of the assertion that $\sum_{k\geq 1} 2\pi \sqrt{k} \frac{\partial}{\partial s}\left. F_{-k}(w,s) \right|_{s=1}q_z^k$ is the holomorphic part of $\mathcal{G}_w(z)$ follows by citing the uniqueness of the analytic continuation in $z$.

It is left to prove that $\mathcal{G}_w(z)$ is weight two biharmonic Maass form. Since $\mathcal{G}_w(z)$ is obtained by taking the derivative $\frac{\partial}{\partial z}$ of a $\Gamma-$invariant function, it is obvious that $\mathcal{G}_w(z)$ is weight two in $z$. Moreover, the straightforward computation that
$$
iy^2 \frac{\partial}{\partial \bar z}\mathcal{G}_w(z)=\Delta_{\hyp}\left(\frac{\partial}{\partial s}\left(G_s(z,w) + \E_\infty^{\mathrm{par}}(w,s)\right)\Big|_{s=1}\right)=-
 \lim_{s\to 1} \left(G_s(z,w) + \E_\infty^{\mathrm{par}}(w,s)\right),
 $$
combined with the fact that $\Delta_{\hyp}\left( \lim_{s\to 1} \left(G_s(z,w) + \E_\infty^{\mathrm{par}}(w,s)\right)\right)=0$ proves that $\mathcal{G}_w(z)$ is biharmonic.

\section{Examples }\label{sect:examples}

\subsection{The full modular group}

Throughout this subsection, let $\Gamma=\mathrm{PSL}(2,\Z)$, in which case the the parabolic Kronecker limit function, $P(w)$ can be expressed, in the notation of \cite{BK19}, as $$P(w)=P_{\mathrm{PSL}(2,\mathbb{Z})}(w)=\log(|\eta(w)|^4 \cdot \Im(w))=\mathbbm{j}(w)-1,$$
where $\eta(w)$ is Dedekind's eta function and the last equality follows from the definition of $\mathbbm{j}_0(w)=\mathbbm{j}(w)$ given on p. 1 of \cite{BK19}.

In this setting, Corollary \ref{Rohr-Jensen}, when combined with \eqref{j_via_F} and Rohrlich's theorem
\eqref{rohrl thm} yields that
\begin{equation}\label{prod with jn}
\langle j_n,\log||f||\rangle =2\pi\sqrt{n}\left( - 2\pi \sum_{w\in \mathcal{F}} \frac{\mathrm{ord}_w(f)}{\mathrm{ord}(w)}\left(\frac{\partial}{\partial s}\left. F_{-n}(w,s) \right|_{s=1} -c_nP(w)\right)\right).
\end{equation}
Moreover, equating the constant terms in the Fourier series expansions for $F_{-n}(z,1)$ and $j_n(z)$, one easily deduces that $2\pi\sqrt{n}c_n=24\sigma(n)$.
This proves Theorem 1.2 of \cite{BK19} and shows that, in the notation of \cite{BK19} one has
\begin{equation}\label{jn for BK}
\mathbbm{j}_n(w)=2\pi\sqrt{n}\frac{\partial}{\partial s}\left. F_{-n}(w,s) \right|_{s=1} -24\sigma(n)P(w),
\end{equation}
an identity which provides a description of $\mathbbm{j}_n(w)$, for $n\geq 1$ different from the one given by formula (3.10) of \cite{BK19}.

Furthermore, from the identity \eqref{delta of deriv of N-P}, combined with the fact that $ \Delta_{\hyp}P(w)=1$, which is a straightforward implication of the Kronecker limit formula \eqref{KronLimitPArGen}, it follows that $$\Delta_{\hyp}\mathbbm{j}_n(w)= 2\pi\sqrt{n}\left( F_{-n}(w,1)-c_n \right)=j_n(w),$$
which agrees with formula (3.10) of \cite{BK19}.

Reasoning as above, we easily see that Theorem 1.3. of \cite{BK19} follows from Corollary \ref{cor:j-function} with $g(z)=j_n(z)$.

Finally, in view of \eqref{prod with jn}, Theorem \ref{thm:generating series} is closely related to the first part of Theorem 1.4 of \cite{BK19}. Namely, for large enough $\Im(z)$, in the notation of \cite{BK19}
\begin{align*}
\mathbb{H}_w(z)&=\sum_{n\geq 0} \mathbbm{j}_n(w) q_z^n=\mathbbm{j}_0(w)+\sum_{n\geq 1}\left(2\pi\sqrt{n}\frac{\partial}{\partial s}\left. F_{-n}(w,s)
\right|_{s=1} -24\sigma(n)P(w)\right)q_z^n\\&=1+P(w)\left(1-24\sum_{n\geq 1}\sigma(n)q_z^n\right) + \sum_{n\geq 1} 2\pi \sqrt{n} \frac{\partial}{\partial s}
\left. F_{-n}(w,s) \right|_{s=1}q_z^n.
\end{align*}

Theorem \ref{thm:generating series} implies that the function $\mathbb{H}_w(z)$ is the holomorphic part of the weight two biharmonic Maass form $$\widehat{\mathbb{H}}_w(z)=P(w)\widehat{E}_2(z)+\mathcal{G}_w(z),$$ where
$$
\widehat{E}_2(z)=1-24\sum_{n\geq 1}\sigma(n)q_z^n - \frac{3}{\pi y}
$$
is the weight two completed Eisenstein series for the full modular group.


\subsection{Genus zero Atkin-Lehner groups}

Let $N=\prod_{\nu=1}^r p_\nu$ be a positive square-free integer which is one of the $44$ possible values for which the quotient
space $Y_{N}^{+} =\overline{ \Gamma_{0}^{+}(N)}\backslash \HH$ has genus zero; see \cite{Cum04} for a list of
such $N$ as well as \cite{JST14}.  Let $\Delta_{N}(z)$ be the Kronecker limit function on $Y_{N}^{+}$ associated to the
parabolic Eisenstein series; it is given by formula \eqref{DeltaN} above.

In the notation of Section \ref{sect Atkin Leh groups_KLF},the function $\Delta_N(z)(j_N^+(z)-j_N^+(w))^{\nu_N}$, is the weight $k_N=2^{r-1}\ell_N$ holomorphic modular form which possesses the constant term $1$ in its $q$-expansion.
Furthermore, this function vanishes only at the point $z=w$, and, by the Riemann-Roch formula, its order of vanishing is equal to $k_N \vol_{\hyp}(Y_N^{+})\cdot\mathrm{ord}(w)/(4\pi )$.

When $N=1$, one has $k_1=12$, $\ell_1=24$, $\nu_1=1$ and $\vol_{\hyp}(Y_N^{+})=\pi/3$, hence $\Delta_1(z)(j_1^+(z)-j_1^+(w))^{\nu_1}$ equals the prime form $(\Delta(z)(j(z)-j(w)))^{1/\mathrm{ord}(w)}$ taken to the power $\mathrm{ord}(w)$; see page 3 of \cite{BK19}.

For any integer $m>1$ the $q$-expansion of the form $j_N^+|T_{m}(z)$ is $q_z^{-m}+O(q_z)$; hence there exists a constant $C_{m,N}$ such that
$j_N^+|T_{m}(z)=2\pi \sqrt{m} F_{-m}(z,1) + C_{m,N}$.
The constant $C_{m,N}$ can be explicitly evaluated in terms of $m$ and $N$ by equating the constant terms in the $q$-expansions.
Upon doing so, one obtains, using equation \eqref{B0 for A-L groups}, that
\begin{align*}
C_{m,N}=-2\pi \sqrt{m} B_{0,N}^+(1;-m)&=-24\sigma(m)\prod\limits_{\nu=1}^r \left(1-
\frac{p_\nu^{\alpha_{p_\nu}(m)+1}(p_\nu -1) }{\left(p_\nu^{\alpha_{p_\nu}(m)+1} - 1\right)(p_\nu+1)}\right)\\&=-24\sigma(m) \prod\limits_{\nu=1}^r \left(1- \kappa_{m}(p_\nu)\right),
\end{align*}
where we simplified the notation by denoting the second term in the product over $\nu$ by $\kappa_m(p_\nu)$.
We now can apply Corollary \ref{cor:j-function} with
$$
g(z)= j_N^+|T_{m}(z)=2\pi \sqrt{m} F_{-m}(z,1)-24\sigma(m)\prod\limits_{\nu=1}^r \left(1- \kappa_{m}(p_\nu)\right)
$$
and $f(z)=\Delta_N(z)(j_N^+(z)-j_N^+(w))^{\nu_N}$. Corollary \ref{cor:j-function}
becomes the statement that
\begin{align*}
\langle j_N^+|T_{m}(z), &\log(y^{\tfrac{k_N}{2}} |\Delta_N(z)(j_N^+(z)-j_N^+(w))^{\nu_N}|) \rangle
\\& = -k_N \vol_{\hyp}(Y_N^{+})\left[\pi\sqrt{m}\left.\frac{\partial}{\partial s}F_{-m}(w,s)\right|_{s=1} \right. \\&
\left. +12\sigma(m)\prod\limits_{\nu=1}^r
\left(1- \kappa_{m}(p_\nu)\right) \left(\beta_N\vol_{\hyp}(Y_N^{+})-\log\left( |\Delta_N(w)|^{2/k_N} \cdot \Im (w)\right) -2\right)\right],
\end{align*}
where $\beta_N$ is given by \eqref{betaN}.  In this form, we have obtained an alternate
proof and generalization of formula (1.2) from \cite{BK19}, which is the special case $N=1$.

\subsection{A genus one example}

Let us consider the case when $\Gamma = \overline{\Gamma_{0}(37)^{+}}$.  The choice of $N=37$ is significant since
this level corresponds to the smallest square-free integer $N$ such that
$Y_{N}^{+}$ is genus one.  From Proposition 11 of \cite{JST13}, we have that
$\vol_{\hyp}(Y_{37}^{+})= 19\pi/3$ and
$$
\beta_{37}=\frac{3}{19\pi}\left(\frac{10}{19}\log37+2-2\log(4\pi)-24\zeta'(-1)\right).
$$
The function field generators are $x_{37}^+(z)=q_z^{-2} + 2q_z^{-1}+ O(q_z)$ and $y_{37}^+(z)=q_z^{-3} + 3q_z^{-1}+ O(q_z)$, as displayed in Table 5 of \cite{JST13}. The generators $x_{37}^+(z)$ and $y_{37}^+(z)$ satisfy the cubic relation $y^2 - x^3 + 6xy - 6x^2 + 41y + 49x + 300 = 0$.

The functions $x_{37}^+(z)$ and $y_{37}^+(z)$ can be expressed in in terms of the Niebur-Poincar\'e series by comparing their
$q$-expansions.  The resulting expressions are that
\begin{align*}
x_{37}^+(z)&=2\pi[\sqrt{2}F_{-2}(z,1)+ 2F_{-1}(z,1)]-2\pi(\sqrt{2}B_{0,37}^+(1;-2)+2B_{0,37}^+(1;-1))\\&=2\pi[\sqrt{2}F_{-2}(z,1)+ 2F_{-1}(z,1)]-\frac{60}{19}
\end{align*}
and
\begin{align*}
y_{37}^+(z)&=2\pi[\sqrt{3}F_{-3}(z,1)+ 3F_{-1}(z,1)]-2\pi(\sqrt{3}B_{0,37}^+(1;-3)+3B_{0,37}^+(1;-1))\\&=2\pi[\sqrt{3}F_{-3}(z,1)+ +3F_{-1}(z,1)]-\frac{84}{19}.
\end{align*}

It is important to note that $x_{37}^+(z)$ has a pole of order two at $z=\infty$, i.e., its $q$-expansion begins with $q_z^{-2}$.
As such, $x_{37}^+(z)$ is a linear transformation of the
Weierstrass $\wp$-function, in the coordinates of the upper half plane, associated to the elliptic curve obtained by
compactifying the space $Y_{37}^{+}$.  Hence, there are three distinct points
$\{w\}$ on $Y_{37}^{+}$, corresponding to the two torsion points under the group law, such that $x_{37}^+(z)-x_{37}^+(w)$
vanishes as a function of $z$ only when $z=w$.  The order of vanishing necessarily is equal to two.  The cusp form
$\Delta_{37}(z)$ vanishes at $\infty$ to order $19$.  Therefore, for such $w$, the form
$$
f_{37,w}(z)=\Delta_{37}^2(z)(x_{37}^+(z)-x_{37}^+(w))^{19}
$$
is a weight $2k_{37}=24$ holomorphic form.  The constant term in its $q$-expansion is equal to $1$,
and $f_{37,w}(z)$ vanishes for points $z \in \mathcal{F}$ only when  $z=w$.  The order of vanishing of $f_{37,w}(z)$ at $z=w$ is $38\cdot \mathrm{ord}(w)$.

With all this, we can apply Corollary \ref{cor:j-function}. The resulting formulas are that
\begin{align*}
\langle x_{37}^+, \log(\Vert f_{37,w}\Vert ) \rangle &= -152\pi^2 \left(\frac{\partial}{\partial s}\left. (\sqrt{2}F_{-2}(w,s) +
2F_{-1}(w,s))\right|_{s=1}\right)\\& +240\pi\left(\log\left(|\eta(w)\eta(37w)|^2\cdot\Im (w)\right) -\frac{10}{19}\log37+2\log(4\pi) +24\zeta'(-1)\right)
\end{align*}
and
\begin{multline*}
\langle y_{37}^+, \log (\Vert f_{37,w}\Vert ) \rangle = - 152\pi^2 \left(\frac{\partial}{\partial s}\left. (\sqrt{3}F_{-3}(w,s) +3F_{-1}(w,s))\right|_{s=1}\right)\\ + 336\pi\left(\log\left(|\eta(w)\eta(37w)|^2\cdot\Im (w)\right) -\frac{10}{19}\log37+2\log(4\pi) +24\zeta'(-1)\right).
\end{multline*}

Of course, one does not need to assume that $w$ corresponds to a two torsion point.
In general, Corollary \ref{cor:j-function} yields an expression where the right-hand side
is a sum of two terms, and the corresponding factor in front would be one-half of the factors above.

\subsection{A genus two example}

Consider the level $N=103$.
In this case, $\vol_{\hyp}(Y_{103}^{+})= 52\pi/3$ and the function field generators are $x_{103}^+(z)=q_z^{-3} + q_z^{-1} + O(q_z)$
and $y_{103}^+(z)=q_z^{-4} + 3q_z^{-2} + 3q_z^{-1} + O(q_z)$, as displayed in Table 7 of \cite{JST13}. The generators $x_{103}^+(z)$ and
$y_{103}^+(z)$ satisfy the polynomial relation $y^3 - x^4 - 5yx^2 - 9x^3 + 16y^2 - 21yx - 60x^2 + 65y - 164x + 18 = 0$.
The surface $Y_{103}^{+}$ has genus two.

From Theorem 6 of \cite{Ni73}, we can write $x_{103}^+(z)$ and $y_{103}^+(z)$ in terms of the Niebur-Poincar\'e series.
Explictly, we have that
\begin{align*}
x_{103}^+(z)&=2\pi[\sqrt{3}F_{-3}(z,1)+ F_{-1}(z,1)]-2\pi(\sqrt{3}B_{0,103}^+(1;-3)+B_{0,103}^+(1;-1))\\&=2\pi[\sqrt{3}F_{-3}(z,1)+ F_{-1}(z,1)]-\frac{15}{13}
\end{align*}
and
\begin{align*}
y_{103}^+(z)&=2\pi[\sqrt{4}F_{-4}(z,1)+ 3\sqrt{2}F_{-2}(z,1) +3F_{-1}(z,1)]\\&-2\pi(\sqrt{4}B_{0,103}^+(1;-4)+3\sqrt{2}B_{0,103}^+(1;-2)+3B_{0,103}^+(1;-1))\\&=2\pi[2F_{-4}(z,1)+ 3\sqrt{2}F_{-2}(z,1) +3F_{-1}(z,1)]-\frac{57}{13}.
\end{align*}

The order of vanishing of $\Delta_{103}(z)$ at the cusp is $\nu_{103}=(12\cdot 52\pi/3)/(4\pi)=52$.  Therefore, for an arbitrary, fixed $w\in\HH$, the form
$$f_{103,w}(z)=\Delta_{103}^3(z)(x_{103}^+(z)-x_{103}^+(w))^{52}$$ is the weight $3k_{103}=36$ holomorphic form which
has constant term in the $q$-expansion equal to $1$.  Let $\{w_{1}, w_{2}, w_{3}\}$ be the three, not necessarily distinct,
points in the fundamental domain $\mathcal{F}$ where $(x_{103}^+(z)-x_{103}^+(w))$ vanishes. One of the points $w_{j}$ is equal to $w$. The form $f_{103,w_j}(z)$ vanishes at $z=w_{j}$ to order $52 \cdot\mathrm{ord}(w_j)$, $j=1,2,3$.

From Section \ref{sect Atkin Leh groups_KLF}, we have that
$$
\beta_{103}=\frac{3}{52\pi}\left(\frac{53}{104}\log103+2-2\log(4\pi)-24\zeta'(-1)\right)
$$
and $P_{103}(z)=\log\left(|\eta(z)\eta(103z)|^2\cdot \Im (z)\right)$.
Let us now apply Corollary \ref{cor:j-function} with $g(z)= x_{103}^+(z)$, in which case $c(g)=-15/13$.
In doing so, we get that

\begin{align*}
\langle x_{103}^+, \log (\Vert f_{103,w}\Vert ) \rangle &= - 208\pi^2
\sum\limits_{j=1}^{3}\left(\frac{\partial}{\partial s}\left. (\sqrt{3}F_{-3}(w_j,s) +F_{-1}(w_j,s))\right|_{s=1}\right)
\\& +120\pi\sum\limits_{j=1}^{3}\left(\log\left(|\eta(w_j)\eta(103w_j)|^2\cdot\Im (w_{j})\right)\right)
\\&-360\pi\left(\frac{53}{104}\log103-2\log(4\pi) -24\zeta'(-1)\right).
\end{align*}
Similarly, we can take $g(z)= y_{103}^+(z)$, in which case $c(g)=-57/13$ and we get that
\begin{align*}
\langle y_{103}^+, \log (\Vert f_{103,w}\Vert ) \rangle &= - 208\pi^2 \sum\limits_{j=1}^{3}
\left(\frac{\partial}{\partial s}\left. (2F_{-4}(w_{j},s)+ 3\sqrt{2}F_{-2}(w_{j},s) +3F_{-1}(w_{j},s))\right|_{s=1}\right)
\\& +456\pi\sum\limits_{j=1}^{3}\left(\log\left(|\eta(w_{j})\eta(103w_{j})|^2\cdot\Im (w_{j})\right)\right)
\\& -1368\pi\left(\frac{53}{104}\log103-2\log(4\pi) -24\zeta'(-1)\right).
\end{align*}

\subsection{An alternative formulation}
In the above discussion, we have written the constant $\beta$ and the Kronecker limit function $P$ separately.  However, it should be pointed out that in all instances the appearance of these terms are in the combination $\beta \vol_{\hyp}(M)- P(z)$.  From \eqref{KronLimitPArGen}, we can write
$$
\beta \vol_{\hyp}(M)- P(z)
= \frac{1}{\vol_{\hyp}(M)} \textrm{\rm CT}_{s=1} \E^{\mathrm{par}}_{\infty}(z,s),
$$
where $\textrm{\rm CT}_{s=1}$ denotes the constant term in the Laurent expansion at $s=1$.  It may
be possible that such notational change can provide additional insight concerning the formulas presented above.

\vspace{5mm}
\noindent

\noindent
James Cogdell \\
Department of Mathematics \\
Ohio State University \\
231 W. 18th Ave \\
Columbus, OH 43210,
U.S.A. \\
e-mail: cogdell@math.ohio-state.edu

\vspace{5mm}\noindent
Jay Jorgenson \\
Department of Mathematics \\
The City College of New York \\
Convent Avenue at 138th Street \\
New York, NY 10031
U.S.A. \\
e-mail: jjorgenson@mindspring.com

\vspace{5mm}\noindent
Lejla Smajlovi\'c \\
Department of Mathematics \\
University of Sarajevo\\
Zmaja od Bosne 35, 71 000 Sarajevo\\
Bosnia and Herzegovina\\
e-mail: lejlas@pmf.unsa.ba
\end{document}